\numberwithin{equation}{section}
\newtheorem{Lemma}[equation]{Lemma}
\newtheorem{Theorem}[equation]{Theorem}
\newtheorem{Corollary}[equation]{Corollary}
\theoremstyle{definition}
\newtheorem{Remark}[equation]{Remark}
\def\iso{\cong}
\def\sub{\subseteq}
\def\bar{\overline}
\def\part{{\operatorname{part}}}
\def\Ann{\operatorname{Ann}}
\def\RS{\operatorname{RS}}
\def\C{{\mathbb C}}
\def\Z{{\mathbb Z}}
\def\row{\operatorname{row}}
\def\Sp{\operatorname{Sp}}
\def\SO{\operatorname{SO}}
\def\O{\operatorname{O}}
\def\b{\mathfrak b}
\def\g{\mathfrak g}
\def\q{\mathfrak q}
\def\s{\mathfrak s}
\def\t{\mathfrak t}
\def\u{\mathfrak u}
\def\gl{\mathfrak{gl}}
\def\sl{\mathfrak{sl}}
\def\so{\mathfrak{so}}
\def\sp{\mathfrak{sp}}
\def\cL{\mathcal L}
\def\bp{{\mathbf p}}
\def\sign{{\operatorname{sign}}}
\def\Tab{{\operatorname{Tab}}}
\def\sTab{{\operatorname{sTab}}}
\def\Row{{\operatorname{Row}}}
\def\sRow{{\operatorname{sRow}}}
\def\word{{\operatorname{word}}}
\def\part{{\operatorname{part}}}
\def\eps{\epsilon}
\def\cop{\cdot_{-}}
\def\dop{\cdot_{+}}
\def\phiop{\cdot_{\phi}}
\title{
Non-integral representation theory of even multiplicity
finite $W$-algebras
}
\author
{Jonathan Brown and Simon M.~Goodwin}
\address{School of Mathematics, University of Birmingham, Birmingham, B15
2TT,~UK}
\email{brownjs@for.mat.bham.ac.uk, goodwin@for.mat.bham.ac.uk}
\thanks{2010 {\it Mathematics Subject Classification}:  17B10,
81R05.}
\begin{document}

\begin{abstract}
We complete the classification of the finite dimensional irreducible representations of finite
$W$-algebras associated
to even multiplicity nilpotent elements in classical
Lie algebras.  This
extends earlier work
in \cite{BG1}, where this classification is determined for such representations of integral
central character.
\end{abstract}

\maketitle

\section{Introduction}
Let $U(\g,e)$ denote the finite $W$-algebra associated to a reductive Lie algebra
$\g$ over $\C$ and a nilpotent element $e \in \g$.  In general there is little concrete knowledge about
the representation theory of $U(\g,e)$, except in some special cases.
For $\g = \gl_n$ a thorough study of the representation theory
of $U(\g,e)$ was completed by Brundan and Kleshchev in \cite{BK2}; in particular they obtained
a classification of finite dimensional irreducible $U(\g,e)$-modules.
In \cite{BG1} the authors classified the finite dimensional irreducible
$U(\g,e)$-modules of integral central character when $\g = \so_{2n}$ or $\sp_{2n}$ and $e$ is an {\em even multiplicity} nilpotent element, i.e.\ all the parts of
the Jordan type of $e$ have even multiplicity.
The main result of this paper is Theorem~\ref{T:main}, which
completes the classification of finite dimensional irreducible representations of $U(\g,e)$
for such $\g$ and $e$.
%, see Theorem~\ref{T:main} for a precise statement.

The key ingredients for the proof of this theorem are: the
the classification of finite dimensional irreducible $U(\g,e)$-modules
when $\g$ is of type A
from \cite{BK2}; the
classification in \cite{Bro2} of
irreducible $U(\g,e)$-modules
when $\g$ is of classical type and $e$ is
a {\em rectangular} nilpotent element, i.e.\ the Jordan type of $e$
has all parts equal;
the highest weight theory for finite $W$-algebras developed in \cite{BGK};
the classification for integral central characters from \cite{BG1}; the 
combinatorics for changing highest weight theories from \cite{BG2};
and Theorem~\ref{T:simon} below, which gives an inductive criterion for
an irreducible highest weight $U(\g,e)$-module to be finite dimensional.

To state Theorem~\ref{T:main}, we
first need a specific realization of the classical Lie algebras.
Let $\g = \so_{2n}$ or $\sp_{2n}$.
Let $\gl_{2n} = \langle e_{i,j} \mid i,j = \pm 1, \dots, \pm n \rangle$.
For $i, j = \pm 1, \dots, \pm n$ let
$f_{i,j} = e_{i,j} - e_{-j,-i}$
in the case that $\g = \so_{2n}$,
and let $f_{i,j} = e_{i,j} - \sign(i) \sign(j) e_{-j,-i}$  in
the case that $\g = \sp_{2n}$.
Now we realize $\g$ as the subalgebra of $\gl_{2n}$ spanned by
$\{ f_{i,j} \mid i,j =\pm 1, \dots, \pm n\}$.
We choose $\t = \langle f_{i,i} \rangle$ as a
maximal toral subalgebra of $\g$.
Then $\t^* = \langle \eps_i \mid i = 1,2,\dots,n \rangle$, where $\eps_i$ is dual to $f_{i,i}$.

We recall the definitions
of {\em s-frames}, {\em s-tables}, and {\em symmetric pyramids} from \cite[\S3]{BG1}.
An {\em s-frame} is a connected collection of rows of
boxes in
the plane which is symmetric with respect
to the $x$ and $y$ axes.
We label the rows of an s-frame with $2r$ rows with $1, \dots, r, -r, \dots, -1$
from top to bottom.
A {\em symmetric pyramid} is an s-frame whose rows weakly decrease in length as one moves away from the $x$-axis.
%the below example of an s-frame is a symmetric pyramid.
Given a symmetric pyramid $P$ with $2n$ boxes, we define $K$, the {\em coordinate
pyramid for $P$},
to be the filling of $P$ with the numbers
$1, \dots, n, -n, \dots -1$ across rows from top to bottom.
An {\em s-table} is skew symmetric filling of an s-frame with complex numbers.

For example,
\begin{equation*}
P=
    \begin{array}{c}
\begin{picture}(60,80)
\put(10,0){\line(1,0){40}} \put(0,20){\line(1,0){60}}
\put(0,40){\line(1,0){60}} \put(0,60){\line(1,0){60}}
\put(10,80){\line(1,0){40}} \put(10,0){\line(0,1){20}}
\put(30,0){\line(0,1){20}} \put(50,0){\line(0,1){20}}
\put(0,20){\line(0,1){40}} \put(20,20){\line(0,1){40}}
\put(40,20){\line(0,1){40}}
\put(60,20){\line(0,1){40}}
\put(10,60){\line(0,1){20}} \put(30,60){\line(0,1){20}}
\put(50,60){\line(0,1){20}}
\put(30,40){\circle*{3}}
\end{picture}
\end{array}
\end{equation*}
is a symmetric pyramid with coordinate pyramid
\begin{equation*}
K =
    \begin{array}{c}
\begin{picture}(60,80)
\put(10,0){\line(1,0){40}} \put(0,20){\line(1,0){60}}
\put(0,40){\line(1,0){60}} \put(0,60){\line(1,0){60}}
\put(10,80){\line(1,0){40}} \put(10,0){\line(0,1){20}}
\put(30,0){\line(0,1){20}} \put(50,0){\line(0,1){20}}
\put(0,20){\line(0,1){40}} \put(20,20){\line(0,1){40}}
\put(40,20){\line(0,1){40}}
\put(60,20){\line(0,1){40}}
\put(10,60){\line(0,1){20}} \put(30,60){\line(0,1){20}}
\put(50,60){\line(0,1){20}}
\put(30,40){\circle*{3}}
\put(20,70){\makebox(0,0){1}}
\put(40,70){\makebox(0,0){2}}
\put(10,50){\makebox(0,0){3}}
\put(30,50){\makebox(0,0){4}}
\put(50,50){\makebox(0,0){5}}
\put(8,30){\makebox(0,0){-5}}
\put(28,30){\makebox(0,0){-4}}
\put(48,30){\makebox(0,0){-3}}
\put(18,10){\makebox(0,0){-2}}
\put(38,10){\makebox(0,0){-1}}
\end{picture}
\end{array},
\end{equation*}
and
\begin{equation*}
A=
    \begin{array}{c}
\begin{picture}(60,80)
\put(10,0){\line(1,0){40}} \put(0,20){\line(1,0){60}}
\put(0,40){\line(1,0){60}} \put(0,60){\line(1,0){60}}
\put(10,80){\line(1,0){40}} \put(10,0){\line(0,1){20}}
\put(30,0){\line(0,1){20}} \put(50,0){\line(0,1){20}}
\put(0,20){\line(0,1){40}} \put(20,20){\line(0,1){40}}
\put(40,20){\line(0,1){40}}
\put(60,20){\line(0,1){40}}
\put(10,60){\line(0,1){20}} \put(30,60){\line(0,1){20}}
\put(50,60){\line(0,1){20}}
\put(30,40){\circle*{3}}
\put(20,70){\makebox(0,0){$a$}}
\put(40,70){\makebox(0,0){$b$}}
\put(10,50){\makebox(0,0){$c$}}
\put(30,50){\makebox(0,0){$d$}}
\put(50,50){\makebox(0,0){$f$}}
\put(8,30){\makebox(0,0){-$f$}}
\put(28,30){\makebox(0,0){-$d$}}
\put(48,30){\makebox(0,0){-$c$}}
\put(18,10){\makebox(0,0){-$b$}}
\put(38,10){\makebox(0,0){-$a$}}
\end{picture}
\end{array}
\end{equation*}
is an s-table with underling s-frame $P$, where $a,b,c,d,f \in \C$.

Fix a symmetric pyramid $P$ with coordinate pyramid $K$.
We define the nilpotent element $e \in \g$ via $e = \sum f_{i,j}$, where we sum over all $i,j \in \{1, \dots, n\}$
such that $i$ is the left neighbor of $j$ in $K$.
So the Jordan type of $e$ has parts given by the row lengths as $P$,
and  $e$ is an even multiplicity nilpotent element of $\g$.
For example,
if $K$ is as above, then $e = f_{1,2} + f_{3,4} + f_{4,5}$.  We note
that a representative of any even multiplicity nilpotent $\tilde G$-orbit in $\g$ can be obtained in this way,
where $\tilde G = \O_{2n}$  if $\g =
\so_{2n}$ and $\tilde G = \Sp_{2n}$ if
$\g = \sp_{2n}$.

We let $\sTab(P)$
denote the set of s-tables with underlying frame $P$.
We identify $\sTab(P)$ with $\t^*$ via
$\sum_{i=1}^n a_i \eps_i \leftrightarrow A$, where $A \in \sTab(P)$ has $a_i$
in the same box as $i$ in $K$.
For example, if $A$ is as above, then
$A$ corresponds to the weight
$a \eps_1 + b \eps_2 + c \eps_3 + d \eps_4 + f \eps_5$.
We say two s-tables are {\em row equivalent} if one can be obtained from the
other by permuting entries within rows.
We let $\sRow(P)$ denote the set of row equivalence classes of elements of
$\sTab(P)$.
For $A \in \sTab(P)$ we let $\bar A \in \sRow(P)$ denote the row equivalence
class containing $A$.

Let $\t^e = \{t \in \t \mid [e,t] = 0\}$,
and let $\g_0 = \g^{\t^e} = \{ x \in \g \mid [x,\t^e] = \{0\}\}$.
So $\g_0$ is a Levi subalgebra of $\g$, and $e$ is a regular nilpotent element of $\g_0$.
We have
$$
\t^e = \left\langle \sum_{j \in \row(i)} f_{j,j} \:\: \vline \:\: i = 1,\dots,r \right\rangle
$$
and
$$
\g_0 = \left\langle f_{i,j} \mid \row(i) = \row(j) \right\rangle \iso \bigoplus_{i=1}^r \gl_{m_i} ,
$$
where $\row(i)$ denotes the row of $K$ in which $i$ occurs and $m_i$ is the length of row $i$
of $P$.
We choose $\q$, a parabolic subalgebra of $\g$ which contains $\g_0$ as its Levi factor,
by setting $\q = \langle f_{i,j} \mid \row(i) \leq \row(j)\rangle$.

By a theorem of Kostant (\cite{Ko}) we have that
$U(\g_0,e) \cong Z(\g_0) \cong S(\t)^{W_0}$, where
$Z(\g_0)$ denotes the center of $U(\g)$ and $W_0 \iso \prod_{i=1}^r S_{m_i}$ is the Weyl group of $\g_0$.  Now we
identify
$\sRow(P)$ with $\t^*/W_0$,
so
$\sRow(P)$ parameterizes the
irreducible finite dimensional $U(\g_0,e)$-modules.
For $\bar A \in \sRow(P)$ we let $L(\bar A,\q)$
denote the irreducible highest weight $U(\g,e)$-module corresponding to $\bar A$ and $\q$,
as defined using the highest weight theory from
\cite{BGK}.

Given $A \in \sTab(P)$ and $z \in \C$, we define $A_z$ to be the diagram
with the same number of rows of $A$ and whose entries in a given row are the elements
of the corresponding row of $A$ which belong to $z + \Z \in \C / \Z$; by a diagram we just mean
a collection of boxes in the plane.
Note that $A_z$ can have empty rows.
For example,
if
\begin{equation*}
A =
    \begin{array}{c}
\begin{picture}(60,80)
\put(10,0){\line(1,0){40}} \put(0,20){\line(1,0){60}}
\put(0,40){\line(1,0){60}} \put(0,60){\line(1,0){60}}
\put(10,80){\line(1,0){40}} \put(10,0){\line(0,1){20}}
\put(30,0){\line(0,1){20}} \put(50,0){\line(0,1){20}}
\put(0,20){\line(0,1){40}} \put(20,20){\line(0,1){40}}
\put(40,20){\line(0,1){40}}
\put(60,20){\line(0,1){40}}
\put(10,60){\line(0,1){20}} \put(30,60){\line(0,1){20}}
\put(50,60){\line(0,1){20}}
\put(30,40){\circle*{3}}
\put(20,70){\makebox(0,0){$4$}}
\put(40,70){\makebox(0,0){$5$}}
\put(10,50){\makebox(0,0){$\pi$}}
\put(30,50){\makebox(0,0){$2$}}
\put(50,50){\makebox(0,0){$3$}}
\put(8,30){\makebox(0,0){-$3$}}
\put(28,30){\makebox(0,0){-$2$}}
\put(48,30){\makebox(0,0){-$\pi$}}
\put(18,10){\makebox(0,0){-$5$}}
\put(38,10){\makebox(0,0){-$3$}}
\end{picture}
\end{array},
\end{equation*}
then
\[
A_{0} =
    \begin{array}{c}
\begin{picture}(40,80)
\put(0,0){\line(1,0){40}}
\put(0,20){\line(1,0){40}}
\put(0,40){\line(1,0){40}}
\put(0,60){\line(1,0){40}}
\put(0,80){\line(1,0){40}}
\put(0,0){\line(0,1){80}}
\put(20,0){\line(0,1){80}}
\put(40,0){\line(0,1){80}}
\put(10,70){\makebox(0,0){$4$}}
\put(30,70){\makebox(0,0){$5$}}
\put(10,50){\makebox(0,0){$2$}}
\put(30,50){\makebox(0,0){$3$}}
\put(8,30){\makebox(0,0){-$3$}}
\put(28,30){\makebox(0,0){-$2$}}
\put(8,10){\makebox(0,0){-$5$}}
\put(28,10){\makebox(0,0){-$3$}}
\end{picture}
\end{array}
\quad \text{ and } \quad
A_{\pi} =
    \begin{array}{c}
\begin{picture}(20,80)
\put(0,40){\line(1,0){20}}
\put(0,60){\line(1,0){20}}
\put(0,40){\line(0,1){20}}
\put(20,40){\line(0,1){20}}
\put(10,50){\makebox(0,0){$\pi$}}
\end{picture}
\end{array}.
\]

We use the partial order $\le$ on $\C$ defined by $a \le b$ if $b-a \in \Z_{\ge 0}$.
A diagram with boxes aligned in columns is said to be {\em column strict} if all entries are strictly decreasing down columns with respect to $\le$.
We say that a diagram is {\em justified row equivalent to column strict} if when all of the rows of
the diagram are left justified, then the resulting diagram is row equivalent to a column strict
diagram. A diagram is said to be {\em convex} if when we left justify the rows, then the resulting diagram has connected columns.

We define $\sTab^\diamond(P)$ to be the set of $ A \in \sTab(P)$
which satisfy the following conditions:
\begin{itemize}
\item[(M1)]
      For every row of $A$, there
     is at most one
      element $\pm y + \Z$ of the set $\{\pm z + \Z \mid z \in \C \}$
      such that the row contains an odd number of entries from $\pm y + \Z$.
 \item[(M2)]
       For each $z + \Z \in \C / \Z$
       and for every row of $A$
the number of entries in that row
       from $z + \Z$ must differ from the number of entries from $-z+\Z$ by at most one.
\item[(M3)]
       For each
         element $\pm y + \Z$ of the set $\{\pm z + \Z \mid z \in \C \setminus \frac{1}{2} \Z \}$
         and for each odd integer $k$ there must be at most two rows of $A$ with
         precisely $k$ entries from $\pm y + \Z$, i.e.\ at most one such row in the upper half-plane.
\item[(M4)]
$A_z$ is convex and justified row equivalent to column strict
for each $z \in \C$.
\end{itemize}
For example,
\begin{equation*}
    \begin{array}{c}
\begin{picture}(60,80)
\put(10,0){\line(1,0){40}} \put(0,20){\line(1,0){60}}
\put(0,40){\line(1,0){60}} \put(0,60){\line(1,0){60}}
\put(10,80){\line(1,0){40}} \put(10,0){\line(0,1){20}}
\put(30,0){\line(0,1){20}} \put(50,0){\line(0,1){20}}
\put(0,20){\line(0,1){40}} \put(20,20){\line(0,1){40}}
\put(40,20){\line(0,1){40}}
\put(60,20){\line(0,1){40}}
\put(10,60){\line(0,1){20}} \put(30,60){\line(0,1){20}}
\put(50,60){\line(0,1){20}}
\put(30,40){\circle*{3}}
\put(20,70){\makebox(0,0){$4$}}
\put(40,70){\makebox(0,0){$5$}}
\put(10,50){\makebox(0,0){$\pi$}}
\put(30,50){\makebox(0,0){$2$}}
\put(50,50){\makebox(0,0){$3$}}
\put(8,30){\makebox(0,0){-$3$}}
\put(28,30){\makebox(0,0){-$2$}}
\put(48,30){\makebox(0,0){-$\pi$}}
\put(18,10){\makebox(0,0){-$5$}}
\put(38,10){\makebox(0,0){-$4$}}
\end{picture}
\end{array}
\in \sTab^\diamond(P).
\end{equation*}

Given an s-table $A$ with $2 r$ rows, let $A^+$ denote an s-table where an extra
box is added to each row of $A$, with entry $i-1$ in the $i$th row.
Since we are only
concerned with
$A^+$ up to row equivalence, we do not specify precisely how these boxes are
inserted.
We use the table $A^+$ as it gives a convenient way to state the part of
Theorem~\ref{T:main} pertaining to $\g = \sp_{2n}$, though this is not entirely necessary, see Remark~\ref{R:noplus}.
Now we define $\sTab^{\diamond,+} (P)$ to be $\{ A \in \sTab(P) \mid A^+ \in \sTab^\diamond(P^+)\}$,
where $P^+$ is the symmetric pyramid with one more box in each row than $P$.

Finally define $\sRow^\diamond(P)$ and $\sRow^{\diamond,+}(P)$ to be the elements of $\sRow(P)$ which
contain elements of $\sTab^\diamond(P)$ or $\sTab^{\diamond,+}(P)$ respectively.

Let $\tilde C =  C_{\tilde G}(e)/ C_{\tilde G}(e)^\circ$ be the component group of the centralizer of $e$ in $\tilde G$.
In \cite[\S5.5]{BG2} we defined an action of $\tilde C$ on
$\{ \bar A \in \sRow(P) \mid L(\bar A,\q) \text{ is finite
dimensional}\}$.
We note that although the definition of this action of $\tilde C$ in
\cite{BG2} is only defined on
s-tables with entries in $\frac{1}{2} \Z$, this definition works equally well on
s-tables with complex entries.  Furthermore, the results in \cite{BG2} that
are stated only for s-tables with entries in entries in $\frac{1}{2} \Z$
remain valid for arbitrary complex entries, and we use these results
in this paper.

We are now in a position to state our main theorem.

\begin{Theorem} \label{T:main} $ $
\begin{enumerate}
\item[(a)] Let $\g = \so_{2n}$. Then
\[
    \{ L(\bar A,\q) \mid \bar A \in \sRow(P), \bar A \text{ is $\tilde C$-conjugate
to an element of }
      \sRow^\diamond(P) \}
\]
is a complete set of isomorphism classes of finite dimensional irreducible
$U(\g,e)$-modules.
\item[(b)]
Let $\g = \sp_{2n}$.  Then
\[
    \{ L(\bar A,\q) \mid \bar A \in \sRow(P), \bar A \text{ is $\tilde C$-conjugate
to an element of }
      \sRow^{\diamond,+}(P) \}
\]
is a complete set of isomorphism classes of finite dimensional irreducible
$U(\g,e)$-modules.
\end{enumerate}
\end{Theorem}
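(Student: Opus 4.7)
The plan is to reduce Theorem \ref{T:main} to the integral central character classification of \cite{BG1} by decomposing each s-table $A$ into its components $A_z$ indexed by cosets $z + \Z \in \C/\Z$. First I would use the inductive criterion of Theorem \ref{T:simon} to separate the finite dimensionality question for $L(\bar A, \q)$ into conditions that can be checked one central character component at a time, realizing the smaller $W$-algebras in the induction as $W$-algebras of the same classical type but associated to a nilpotent whose Jordan type is a subshape of $P$. Combined with the $\tilde C$-action on $\sRow(P)$ from \cite{BG2}, this reduces the problem to showing that $\bar A$ being $\tilde C$-conjugate to an element of $\sRow^\diamond(P)$ (respectively $\sRow^{\diamond,+}(P)$) is exactly what captures finite dimensionality.

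The main case analysis splits on whether $z \in \frac{1}{2}\Z$. When $z + \Z = -z + \Z$, the contribution of $A_z$ is controlled by the integral central character classification of \cite{BG1} after an integral shift; this yields precisely conditions (M1), (M2), and (M4) restricted to this coset. When $z + \Z \neq -z + \Z$, the cosets $z + \Z$ and $-z + \Z$ are disjoint, and the skew symmetry of $A$ forces $A_{-z}$ to be determined by $A_z$; here the finite dimensionality condition should reduce to the type A classification of \cite{BK2} applied to the entries of $A_z$, yielding conditions (M3) and (M4) for these cosets. Condition (M1) arises from a cross-coset compatibility in the highest weight theory when multiple cosets contribute odd row counts, and links the two cases. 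For part (b), the translation $A \mapsto A^+$ converts the symplectic situation to a form where the same analysis applies, as signaled in Remark \ref{R:noplus}.

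The main obstacle will be executing the inductive reduction via Theorem \ref{T:simon} so that each step cleanly peels off a single central character component, and then reassembling the local conclusions across cosets to obtain the full conditions (M1)--(M4). The base cases, which include the rectangular classification from \cite{Bro2}, must be combined consistently, and one must track how the $\tilde C$-action permutes elements of $\sRow^\diamond(P)$ so the final classification is genuinely up to $\tilde C$-conjugacy rather than equality on the nose. Verifying that (M1) and (M3) correctly encode the cross-coset and cross-row interactions, and ensuring no combinatorial obstructions are missed when the half-integral and non-half-integral pieces coexist in a single row, is expected to be the most delicate part of the argument.
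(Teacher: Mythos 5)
Your high-level plan shares some ingredients with the paper — decomposition of $A$ into coset pieces $A_z$, reduction to the integral classification of \cite{BG1} for cosets in $\frac{1}{2}\Z$, use of the type A classification from \cite{BK2}, and the $A \mapsto A^+$ device for part (b) — but the mechanism you describe is substantially different from what the paper actually does, and the steps you leave as "the main obstacle" are precisely where the content of the proof lies.

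For \emph{sufficiency} (elements of $\sRow^\diamond_\phi(P)$ give finite dimensional modules), the paper does not proceed by separating cosets and invoking Theorem~\ref{T:simon}. Instead it proves Lemma~\ref{L:mainhelp}, showing $L(\bar A,\q) \cong L(\bar B,\q_{(1,-1)})$ via row-swapping operations and $c\cdot$, and then argues by induction on the number of rows of $P$ that one can pass all the way to $L(\bar B,\q_-)$ for the opposite parabolic $\q_-$. The module is then both a highest and a lowest weight module, hence finite dimensional. The inductive step requires verifying that (M1)--(M4) persist for $\bar D_{-2}^2$ after the swaps, which uses Lemma~\ref{L:sharpknuth}, Theorem~\ref{T:recs}, and \cite[Theorem 5.13]{BG1}. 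Your sketch never produces a lowest weight structure or any other mechanism that would force finite dimensionality; peeling off cosets via Theorem~\ref{T:simon} does not, by itself, yield it.

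For \emph{necessity}, your claim that the finite dimensionality condition for $A_z$ with $z \notin \frac{1}{2}\Z$ "should reduce to the type A classification of \cite{BK2}" is not correct as stated, and this is where the real work is. The type A result \cite[Theorem 7.9]{BK2} controls $(\bar A_z)_+$ and $(\bar A_z)_-$ separately, but what must be shown is that the whole table $A_z$ (including rows $1$ and $-1$, which have unequal lengths $p_1', p_1''$ in general) is justified row equivalent to column strict. The paper establishes this through a detailed Robinson--Schensted computation: it derives explicit lower bounds on $\part(\RS(A_z))^T$ from the diagram $B$ obtained by sliding row $-1$, then upper bounds on $\part(\RS(A_z))$, in eight cases governed by the parities of $p_1, p_2$ and inequalities like $p_1' \lessgtr p_2'/2$; and it constructs explicit long descending chains using the $\sharp$-element to pin down $\part(\RS(A_z))$ exactly, after which Theorem~\ref{T:recs} applies. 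This combinatorial pinch argument is absent from your proposal. In short, your decomposition-by-cosets framing identifies several correct ingredients, but it omits the two load-bearing arguments: the highest-equals-lowest-weight strategy via Lemma~\ref{L:mainhelp} for sufficiency, and the RS partition bounds for necessity.
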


\begin{Remark} \label{R:noplus}
We note that in the case that $\g = \sp_{2n}$
it is possible to state the classification in Theorem~\ref{T:main}(b)
without using the table $A^+$.  To do
this we replace the condition (M1) with the following condition, which only applies when $\g = \sp_{2n}$:
\begin{itemize}
 \item[(M1$'$)]
    If a row of $A$ has an even number of integral entries, then the number of entries in that row from
      every element $\pm y + \Z$ of the set $\{\pm z + \Z \mid z \in \C \setminus \Z \}$
       must be even.
    If a row of $A$ has an odd number of integral entries, then there must be at most one
      element $\pm y + \Z$ of the set $\{\pm z + \Z \mid z \in \C \setminus \Z \}$
      with an odd number of entries in that row.
\end{itemize}
The statement we use is more convenient for us, in particular, for the proof.
\end{Remark}

As a corollary to this theorem we obtain a parametrization of the primitive ideals of $U(\g)$ with associated variety
$\overline {G \cdot e}$, where $G$ denotes the adjoint group of $\g$.  Below we only state this for the case $\g = \sp_{2n}$, as the situation
for $\so_{2n}$ is more complicated, see Corollary~\ref{C:prim} for the full statement.
For every element of the set $\{\pm z  + \Z \mid z \in \C \setminus \frac{1}{2} \Z\}$ make a choice
$z_+ + \Z$ and $z_- + \Z$ so that
$\pm z + \Z = (z_+ + \Z) \sqcup (z_- + \Z)$.
Let $\sRow^{\bullet,+}(P)$ denote the set of elements $\bar A \in \sRow^{\diamond,+}(P)$
such that
for each element of the set $\{\pm z + \Z \mid z \in \C \setminus \frac{1}{2} \Z \}$
and each row in the top half of $A$, there are either more entries, or the same number of entries
in that row from $z_+ + \Z$ than there are from $z_- + \Z $.

A Borel subalgebra $\b$ of $\g$ is given by the upper triangular matrices in $\g$.  Let $\rho$ be the half sum of the positive roots corresponding to $\b$.
For a weight $\lambda \in \t^*$ let $L(\lambda)$ denote the irreducible highest weight $U(\g)$ with highest weight $\lambda - \rho$.

Given $\bar A \in \sRow(P)$
let $\lambda_{A}$ denote
a weight corresponding to an element of $\bar A$ which is antidominant for $\g_0$, that
is $\langle \lambda_A, \alpha^{\vee} \rangle \not \in \Z_{>0}$ for all positive roots $\alpha$ corresponding to
the Borel subalgebra $\b \cap \g_0$ of $\g_0$.
Note that by \cite[Theorem 2.6]{BV2} the annihilator $\Ann_{U(\g)} L(\lambda_A)$ does not depend on the
choice of $A \in \bar A$ so long as $\lambda_A$ is antidominant for $\g_0$.

\begin{Corollary}
Let $\g = \sp_{2n}$. Then
 $\{ \Ann_{U(\g)} L(\lambda_{A}) \mid \bar A \in \sRow^{\bullet,+}(P)\}$ is a complete set of primitive ideals
  with associated variety equal to $\overline{G \cdot e}$.
\end{Corollary}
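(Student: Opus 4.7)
The plan is to combine Theorem~\ref{T:main}(b) with Losev's theorem that primitive ideals of $U(\g)$ with associated variety $\overline{G \cdot e}$ are in bijection with $C_G(e)/C_G(e)^\circ$-orbits on isomorphism classes of finite dimensional irreducible $U(\g,e)$-modules, and then to match orbit representatives with the set $\sRow^{\bullet,+}(P)$.

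First I would identify, under Losev's bijection, the primitive ideal attached to $L(\bar A,\q)$ with $\Ann_{U(\g)} L(\lambda_A)$. The parabolic Whittaker/Skryabin functor from category $\mathcal{O}^\q$ to $U(\g,e)$-modules underlying the highest weight theory of \cite{BGK} sends the Verma module $M(\lambda_A)$ to a Verma module for $U(\g,e)$ with unique irreducible quotient $L(\bar A,\q)$, which forces the annihilators to match. The antidominance of $\lambda_A$ for $\g_0$ ensures, via \cite[Theorem 2.6]{BV2}, that $\Ann_{U(\g)} L(\lambda_A)$ depends only on $\bar A$.

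Next I would verify that $\sRow^{\bullet,+}(P)$ is a complete set of representatives for the $C_G(e)/C_G(e)^\circ$-action on $\{\bar A \mid L(\bar A,\q) \text{ is finite dimensional}\}$. By Theorem~\ref{T:main}(b) this set is the $\tilde C$-saturation of $\sRow^{\diamond,+}(P)$. The central element $-I \in \Sp_{2n}$ acts trivially on $\sRow(P)$, so the kernel of the natural surjection $\tilde C \twoheadrightarrow C_G(e)/C_G(e)^\circ$ acts trivially and the $\tilde C$-orbits on $\sRow(P)$ coincide with the $C_G(e)/C_G(e)^\circ$-orbits. By the explicit description of the $\tilde C$-action in \cite[\S5.5]{BG2}, the only identifications among elements of $\sRow^{\diamond,+}(P)$ are the row-wise swaps of the cosets $z_+ + \Z$ and $z_- + \Z$ for $z \notin \frac{1}{2}\Z$, and the inequality condition defining $\sRow^{\bullet,+}(P)$ is set up to pick out one representative from each such swap orbit.

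The main obstacle will be the combinatorial verification in this last step: one must check that the row-wise swap action, restricted to $\sRow^{\diamond,+}(P)$, is free enough modulo the antipodal symmetry of s-tables that the nonstrict inequality cuts out exactly one element per orbit---neither forcing two distinct representatives when the two counts are equal in some row, nor missing any orbit---and that the sign-flip operations in $\tilde C$ attached to integral and half-integral cosets contribute nothing beyond what is already built into $\sRow^{\diamond,+}(P)$.
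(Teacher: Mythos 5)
Your approach is essentially the paper's: Corollary~\ref{C:prim} is stated to follow immediately from Theorem~\ref{T:main} together with Losev's map $\cdot^\dagger$ from \cite[\S1.2]{Lo2}, which is precisely the bijection between primitive ideals of associated variety $\overline{G\cdot e}$ and $C_G(e)/C_G(e)^\circ$-orbits on finite dimensional irreducible $U(\g,e)$-modules that you invoke, combined with the identification of $\Ann_{U(\g)}L(\lambda_A)$ as the image of $L(\bar A,\q)$ under that correspondence. The combinatorial verification you flag at the end---that the $\tilde C$-action (equivalently the $C$-action, since $-I$ acts trivially) on $\sRow^{\diamond,+}(P)$ has $\sRow^{\bullet,+}(P)$ as a transversal---is likewise left implicit in the paper, which calls the corollary ``immediate,'' so your proposal faithfully tracks the paper's route and correctly identifies the one step the paper does not spell out.
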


\section{Levi subalgebras and an inductive approach}

Throughout this section $\g$ denotes a reductive Lie algebra over $\C$,
and $e$ denotes a nilpotent element of $\g$.
We define $\t^e$ and $\g_0$ as in \cite[\S2]{BGK}.
In \cite[\S3]{BG1} we showed that ``intermediate'' highest weight theories can be defined on
various intermediate Levi subalgebras which lie between $\g_0$ and $\g$;
similar constructions were made by Losev in \cite{Lo3}.
Recall from \cite[\S3]{BG1} that a subalgebra $\s \in \t^e$ is called a {\em full subalgebra}
if $\s$ is the centre of $\g^\s$,
and that
every Levi subalgebra which contains $\g_0$ is equal to $\g^\s$
for some full subalgebra $\s \sub \t^e$.
Given such a full subalgebra $\s$, we choose a parabolic subalgebra $\q_\s$ which contains
$\g^\s$ as a Levi factor.  We let $\{V_\Lambda \mid \Lambda \in \cL_\s\}$ be a complete
set of isomorphism classes of finite dimensional irreducible $U(\g^\s,e)$-modules.
In \cite[\S3.3]{BG2},
we show how a $U(\g^\s,e)$-module $V_\Lambda$ for
$\Lambda \in \cL_\s$ can be induced up to a ``parabolic Verma module'' $M_\s(\Lambda,\q_\s)$  for $U(\g,e)$.
This parabolic Verma module has an irreducible head, denoted by $L_\s(\Lambda, \q_\s)$,
In the case that $\s = \t_e$ we omit the subscript $\s$,
so we write $\cL$ for $\cL_\s$, $\q$ for $\q_s$, and for $\Lambda \in \cL$
we denote the corresponding irreducible highest weight module by $L(\Lambda,\q)$.

Fix $\q$, a choice of a parabolic subalgebra of $\g$ such that $\q$ has $\g_0$ as its Levi factor.
Now $\q^\s$ is a parabolic subalgebra of $\g^\s$ which contains $\g_0$ as its Levi factor.
We denote the
irreducible highest weight module for $U(\g^\s,e)$ corresponding to
$\Lambda$ and $\q^s$ by $L^\s(\Lambda,\q)$.
We note that the labelling of these highest weight modules involves ``shifts'', we refer the reader to
\cite[\S3.3]{BG2} for details.

Let $\s$ be any full subalgebra $\t^e$.
We say that $\s$ is an {\em admissible full
subalgebra for $\q$} if
there exists a parabolic subalgebra $\q_s$ of $\g$ which contains $\q$ and which
has
$\g^\s$ as its Levi factor; note that if $\q_\s$ exists, then it is unique.

Our next lemma shows that we can change highest weight theories
at the level of some intermediate Levi subalgebras.

\begin{Lemma} \label{L:induct}
Let $\q,\q'$ be parabolic subalgebras of $\g$ which have $\g_0$ as a
Levi factor. Let $\s$ be an admissible full subalgebra for both $\q$ and $\q'$ such
that $\q_\s$ and $\q'_\s$ can be chosen so that
$\q_\s = \q'_\s$. Let $\Lambda, \Lambda' \in \cL$ be such that $L^\s(\Lambda, \q) \cong L^\s(\Lambda', \q')$.
Then $L(\Lambda, \q) \cong L(\Lambda', \q')$.
\end{Lemma}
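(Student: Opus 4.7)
The strategy is to express both $L(\Lambda,\q)$ and $L(\Lambda',\q')$ as the irreducible heads of parabolic Verma modules for $U(\g,e)$ induced from $U(\g^\s,e)$ through the common parabolic $\q_\s = \q'_\s$, so that the two parabolic Verma modules literally coincide. Write $\Lambda^{(\s)}, {\Lambda'}^{(\s)} \in \cL_\s$ for the labels, in the $U(\g^\s,e)$-theory, of the finite dimensional irreducible $U(\g^\s,e)$-modules $L^\s(\Lambda,\q)$ and $L^\s(\Lambda',\q')$ respectively. The essential input is a transitivity statement for parabolic induction which should already be part of the framework of \cite[\S3.3]{BG2}: the irreducible head of the $U(\g,e)$-module obtained by first inducing $V_\Lambda$ from $\g_0$ to $\g^\s$ via $\q^\s$ (to produce $L^\s(\Lambda,\q) = V_{\Lambda^{(\s)}}$) and then inducing through $\q_\s$ agrees with the one-step induction via $\q$, so that
\[
L(\Lambda,\q) \cong L_\s\big(\Lambda^{(\s)}, \q_\s\big), \qquad L(\Lambda',\q') \cong L_\s\big({\Lambda'}^{(\s)}, \q'_\s\big).
\]

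Granted these identifications, the lemma is essentially formal. The hypothesis $L^\s(\Lambda,\q) \cong L^\s(\Lambda',\q')$ forces $\Lambda^{(\s)} = {\Lambda'}^{(\s)}$ in $\cL_\s$, and combined with $\q_\s = \q'_\s$ this yields $L_\s(\Lambda^{(\s)}, \q_\s) = L_\s({\Lambda'}^{(\s)}, \q'_\s)$. Hence $L(\Lambda, \q) \cong L(\Lambda', \q')$, as required.

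The main obstacle is therefore the verification of the transitivity identity used in the first paragraph, which is really a statement about iterated parabolic induction for finite $W$-algebras. Carrying this out requires careful bookkeeping of the $\rho$-type shifts in the parametrizations of highest weight modules flagged in \cite[\S3.3]{BG2}, as well as identifying the appropriate intertwiner between the one-step parabolic Verma module $M(\Lambda,\q)$ and the two-step parabolic Verma module $M_\s(\Lambda^{(\s)},\q_\s)$. The admissibility hypothesis on $\s$ — namely that there exists a parabolic $\q_\s$ of $\g$ containing $\q$ with Levi factor $\g^\s$ — is precisely the condition that lets the two highest weight theories factor compatibly through $\g^\s$, and once this compatibility has been made precise the proposed proof goes through with no further work.
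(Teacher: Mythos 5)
Your argument is exactly the route the paper takes: the paper simply cites \cite[Proposition~3.6]{BG1}, and that proposition is precisely the transitivity-of-induction identity $L(\Lambda,\q) \cong L_\s(\Lambda^{(\s)},\q_\s)$ that you isolate in your first paragraph; once that is in hand the deduction is, as you say, formal. You have correctly reconstructed both the cited input and the (short) remaining argument.
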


\begin{proof}
This follows immediately from \cite[Proposition 3.6]{BG1}
\end{proof}

We say that $U(\g^\s,e)$ is a rank 1 Levi subalgebra of $U(\g,e)$ if $\s$ is of
codimension
1 in $\t^e$.  This means that the restricted root system of $\g^\s$ with respect
to $\t^e$ is
rank 1 and that there are only two parabolic subalgebras of $\g^\s$ containing
$\g_0$ as
a Levi subalgebra, a positive one and its opposite.  We refer the reader to \cite[\S2]{BruG}
for information on restricted root systems.

Next we define a rank one change of highest weight theories.  Let $\Lambda
\in \cL$, let $\q$
be a parabolic subalgebra with Levi factor $\g_0$ and let $\s$ be an admissible full
subalgebra for $\q$
of codimension 1 in $\t^e$.  Let $\u_\s$ be the nilradical of
$\q_\s$  and let $\q' = (\q')^\s + \u_\s$ where $(\q')^\s$ is the opposite
parabolic subalgebra
of $\g^\s$ to $\q^\s$.    Suppose that $L^\s(\Lambda,\q^\s)$ is finite
dimensional,
then there must exist $\Lambda' \in \cL$ such that $L^\s(\Lambda,\q^\s) \iso
L^\s(\Lambda',\q'^\s)$.
In this case we say that $(\Lambda',\q')$ is obtained from $(\Lambda,\q)$ by a
{\em rank one change} of highest weight theories.
Note that in this case we have that $L(\Lambda,\q) \iso L(\Lambda',\q')$ by
Lemma~\ref{L:induct}.

The following theorem provides an inductive approach to determine if highest
weight $U(\g,e)$-modules are finite dimensional.  Before giving the formal statement
we give an intuitive idea of the essence of the theorem.  It roughly says that
$L(\Lambda,\q)$ is finite dimensional if the corresponding highest weight
module is finite dimensional for each rank 1 Levi subalgebra of $\g$.  The
statement seems complicated, because to check this condition for all rank 1 Levi
subalgebras we have to change highest weight theories.
The theorem is analogous to, though
more complicated than,
the fact that an irreducible highest weight module for $U(\g)$ is finite dimensional
provided the corresponding irreducible highest weight module is finite dimensional for each
$\sl_2$ associated to a simple root.  The proof works by showing that
the inductive condition implies that the highest weight module is also
a lowest weight module and, therefore, must be finite dimensional.

\begin{Theorem} \label{T:simon}
Let $\Lambda \in \mathcal{L}$ and let $\q$ be a parabolic subalgebra of $\g$
with Levi factor $\g_0$.  Then the following are equivalent:
\begin{enumerate}
\item[(i)] $L(\Lambda,\q)$ is finite dimensional.
\item[(ii)] If $(\Lambda',\q')$ is obtained from $(\Lambda,\q)$ by a sequence of
rank one changes of highest weight theories, then
for all full subalgebras $\s$ of $\t^e$ of codimension 1 that are compatible
with $\q'$, we have that $L^\s(\Lambda,(\q')^\s)$ is finite
dimensional.
\end{enumerate}
\end{Theorem}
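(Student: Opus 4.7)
The plan is to establish the two implications separately; (i) $\Rightarrow$ (ii) is essentially formal, while (ii) $\Rightarrow$ (i) carries out the strategy described informally before the theorem: use condition (ii) to walk between all parabolic subalgebras of $\g$ having $\g_0$ as a Levi factor, eventually reaching the opposite parabolic $\bar\q$ of $\q$, which forces the module to be both a highest and lowest weight module. For (i) $\Rightarrow$ (ii), assume $L(\Lambda, \q)$ is finite dimensional. Iterating Lemma~\ref{L:induct}, every $L(\Lambda', \q')$ obtained by a sequence of rank one changes is isomorphic to $L(\Lambda, \q)$, hence finite dimensional. For any codimension one full subalgebra $\s$ compatible with $\q'$, the module $L^\s(\Lambda', (\q')^\s)$ is, after the appropriate shift in labelling of \cite[\S3.3]{BG2}, identified with the $U(\g^\s, e)$-submodule of $L(\Lambda', \q')$ generated by the highest weight vector, so it too is finite dimensional.

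For (ii) $\Rightarrow$ (i), the parabolic subalgebras of $\g$ containing $\g_0$ as a Levi factor correspond bijectively to chambers of the restricted root system of $\g$ with respect to $\t^e$, with two adjacent chambers differing by a single wall whose associated codimension one full subalgebra $\s$ governs a rank one change of highest weight theory. Starting from $(\Lambda, \q)$, we traverse a sequence of adjacent chambers toward the one corresponding to $\bar\q$, applying a rank one change at each step. Condition (ii) guarantees at each stage that the relevant $L^\s$ is finite dimensional, which produces the $\Lambda''$ demanded by the rank one change. Combining the resulting sequence of isomorphisms from Lemma~\ref{L:induct} yields some $\Lambda^\ast \in \cL$ with $L(\Lambda, \q) \cong L(\Lambda^\ast, \bar\q)$, so this common module is simultaneously a highest weight module with respect to both $\q$ and $\bar\q$. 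Finite dimensionality of a $U(\g,e)$-module which is simultaneously highest and lowest weight then follows the familiar Lie algebra pattern: the weights, regarded as characters of $U(\g_0,e) \cong Z(\g_0)$, lie between the highest weight in the partial order from $\q$ and the lowest weight in the opposite partial order from $\bar\q$, giving a finite set, while each weight space is finite dimensional by the PBW theorem for $U(\g,e)$ from \cite{BGK}.

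The main obstacle I expect is the bookkeeping in the walking argument: the highest weight labels $\Lambda'$ transform in a shift-dependent manner at each rank one change, and one must verify that condition (ii), phrased uniformly over the entire orbit of $(\Lambda, \q)$ under sequences of rank one changes, really does furnish a finite-dimensional $L^\s$ for every rank one step along the chosen walk from $\q$ to $\bar\q$. One should also check that such a walk exists with each chamber transition induced by an admissible codimension one full subalgebra, a combinatorial statement about the restricted Weyl chamber arrangement accessible via the discussion in \cite[\S2]{BruG}. Once these points are handled, the deduction of finite-dimensionality from the combined highest/lowest weight structure proceeds along the classical lines sketched above.
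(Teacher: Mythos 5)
Your proposal is correct and follows essentially the same route as the paper: (i)\,$\Rightarrow$\,(ii) is read off from the highest weight theory (the paper cites \cite[Proposition 3.6]{BG1} where you argue it directly via the highest weight space), and (ii)\,$\Rightarrow$\,(i) proceeds by walking through a sequence of rank one changes of highest weight theory from $\q$ to the opposite parabolic $\q_-$, concluding via Lemma~\ref{L:induct} that $L(\Lambda,\q)$ is simultaneously highest and lowest weight and hence finite dimensional. You fill in somewhat more detail on the chamber-walking combinatorics than the paper does, but the structure of the argument is identical.
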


\begin{proof}
That (i) implies (ii) is clear from \cite[Proposition 3.6]{BG1}.
To prove that (ii) implies (i),
let $\q_-$ be the opposite parabolic subalgebra to $\q$.
The hypothesis ensures that we can pass from $(\Lambda,\q)$ to
$(\Lambda_-,\q_-)$ through a finite
sequences of rank one changes for some $\Lambda_- \in \cL$.  Therefore, we see
that $L(\Lambda,\q) \iso L(\Lambda_-,\q_-)$ by Lemma~\ref{L:induct}.
So $L(\Lambda,\q)$ is both a highest weight and lowest weight module.  Now we
see that the $\t^e$ weights giving nonzero weight spaces are
bounded above and below, and thus finite in number.  As the weight spaces are
finite dimensional, we are done.
\end{proof}

\section{The Robinson-Schensted algorithm}
We use the definitions and notation regarding frames and tables
from \cite[\S4]{BG1}.
The Robinson-Schensted algorithm is used extensively in \cite{BG1}, though the
version presented there only
uses lists of integers.  In this paper we need to use the standard adaption
of this algorithm to lists of complex numbers.

In this paper,
the
Robinson--Schensted algorithm takes as input a word, i.e. a finite list, of
complex numbers, and outputs a tableaux.
Throughout this discussion keep in mind that $\geq$ denotes the partial order on complex
numbers where
$a \ge b$ if $a-b \in \Z_{\ge 0}$.

The Robinson--Schensted algorithm is defined recursively, starting with the empty
table. If $w = a_1 \dots a_n$ is the word, then we assume that
$a_1, \dots, a_{i-1}$ have already been inserted. To insert $a_i$,
assume $b_1, \dots,  b_k$ is the bottom row of the tableaux, and is
non-decreasing.
If $a_i \not < b_j$ for $j=1, \dots, k$, then insert $a_i$ at the end of the
bottom row.
Otherwise there exists $j$ such that $a_i \not < b_{l}$ for $l=1,\dots,j-1$ and $a_i <
b_{j}$. Replace $b_j$ with $a_i$, then recursively insert $b_j$
into the diagram with the bottom row removed.  In this latter
situation we say that $a_i$ {\em bumps} $b_j$. We write $\RS(w)$ for
the output of the Robinson--Schensted algorithm applied to the word
$w$. Given a frame $F$ and $A \in \Tab(F)$ we write $\RS(A) =
\RS(\word(A))$,
where $\word(A)$ is the list of complex numbers obtained by reading off the
entries
of $A$ from left to right, top to bottom.

We now recall two alternative methods for calculating $\part(\RS(w))$
which are
in a sense dual to each other
(recall that $\part(\RS(w))$ denotes the partition corresponding to the shape
of $\RS(w)$).
The first involves finding disjoint non-decreasing subwords of $w$ (a sequence $(a_i)$ of complex numbers
is non-decreasing if $i < j$ implies that $a_i \not > a_j$).
We define $\ell(w,k)$ to be the maximum possible sum of the
lengths of $k$ disjoint non-decreasing subsequences of $w$.
The following lemma is standard for words of integers, see for example \cite[Lemma 4.3]{BG1}.
The statement below is generalized to our setting, and the proof goes
through for this generalization.

\begin{Lemma} \label{L:altRS}
Let $w$ be a word of complex numbers and let $\bp = (p_1 \ge
\dots \ge p_n) = \part(\RS(w))$. Then for all $k \ge 1$, we have
$\ell(w,k) = p_1 + \dots + p_k$.
\end{Lemma}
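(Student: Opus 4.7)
The plan is to reduce the lemma to its integer-alphabet analogue \cite[Lemma 4.3]{BG1} by decomposing $w$ according to cosets of $\Z$ inside $\C$. For each coset $c \in \C/\Z$, let $w|_c$ denote the subword of $w$ consisting of those letters lying in $c$. Within a single coset, the partial order $\le$ restricts to a total order isomorphic to the usual order on $\Z$, so \cite[Lemma 4.3]{BG1} applies directly to $w|_c$, yielding $\ell(w|_c, k) = p_1^{(c)} + \dots + p_k^{(c)}$, where $p^{(c)} := \part(\RS(w|_c))$.

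First I would establish a coset-decomposition lemma for the Robinson--Schensted algorithm: for each coset $c$ and each $i \ge 1$, the multiset of coset-$c$ entries in row $i$ of $\RS(w)$ coincides with the multiset of entries in row $i$ of $\RS(w|_c)$. The proof is by induction on $|w|$, exploiting the key observation that the bumping rule $a_i < b_j$ requires $b_j - a_i \in \Z_{>0}$, forcing $a_i$ and $b_j$ to lie in the same coset. Hence any bumping chain initiated by inserting a letter $a$ only displaces letters from $a$'s own coset, and the sequence of rows visited in the full insertion agrees with that of the corresponding insertion into $\RS(w|_c)$ alone. In particular $\sum_c p_i^{(c)} = p_i$ for each $i$.

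Next I would establish the identity $\ell(w, k) = \sum_{c \in \C/\Z} \ell(w|_c, k)$. The inequality $\le$ follows by partitioning any family of $k$ disjoint non-decreasing subsequences of $w$ by coset, since each coset restriction is a non-decreasing subsequence of $w|_c$ in the total order of $c$. For $\ge$, given optimal families $\{S_1^{(c)}, \dots, S_k^{(c)}\}$ in each $w|_c$, the merged subsequences $T_j := \bigcup_c S_j^{(c)}$ (for $j = 1, \dots, k$) are non-decreasing in $w$: two entries at consecutive positions within $T_j$ either belong to the same coset (in which case the within-coset non-decreasing property applies) or to distinct cosets (in which case they are incomparable and automatically satisfy $a_{i_r} \not > a_{i_s}$). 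Combining this identity with the integer case and the coset-decomposition lemma gives
\[
\ell(w, k) = \sum_{c \in \C/\Z} \ell(w|_c, k) = \sum_{c \in \C/\Z} \sum_{i=1}^k p_i^{(c)} = \sum_{i=1}^k p_i.
\]

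The main obstacle is the coset-decomposition lemma: one must verify that the bumping chain in the full insertion visits the same rows as in the coset-restricted insertion, even though the column positions may differ due to intervening entries from other cosets. This reduces to the observation that in any row of $\RS(w)$ the coset-$c$ entries form a non-decreasing subword in the total order of $c$, so the leftmost full-row entry strictly greater than an incoming coset-$c$ letter $a$ must itself be of coset $c$, and thus coincides with the target of the bumping step in the coset-restricted insertion. Once this is in place, the remaining arguments are routine.
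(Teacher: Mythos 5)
Your proof is correct, but it takes a genuinely different route from the paper. The paper gives no explicit argument: it simply remarks that the statement is standard for words of integers (citing \cite[Lemma 4.3]{BG1}) and asserts that the proof ``goes through for this generalization,'' i.e.\ one is meant to re-run the integer proof of Greene's theorem and observe that each step still works when $\le$ is the partial order $a \le b \iff b-a \in \Z_{\ge 0}$. You instead reduce the complex case to the integer case as a black box, via a coset decomposition $w \mapsto (w|_c)_{c \in \C/\Z}$. The two pillars of your reduction are both sound: (1) since $a < b$ forces $a$ and $b$ into the same coset of $\Z$, bumping chains never leave a coset, so the coset-$c$ entries of row $i$ of $\RS(w)$ are exactly row $i$ of $\RS(w|_c)$ — in particular $p_i = \sum_c p_i^{(c)}$; and (2) $\ell(w,k) = \sum_c \ell(w|_c,k)$, since any $k$-family of disjoint non-decreasing subsequences of $w$ splits coset-by-coset, and conversely per-coset families merge into a valid family because cross-coset pairs are incomparable. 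Your route is arguably tighter as a logical matter: it uses the integer result only as a cited lemma rather than asking the reader to re-inspect its proof, at the cost of the extra bookkeeping of the $\RS$-splitting lemma. Either way the result is established, and your argument would serve equally well for the dual statement (Lemma \ref{L:altcRS}) with strictly decreasing subsequences.
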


The ``dual'' version of the above lemma considers lengths of strictly decreasing
subwords.
We define \textctc$(w,k)$ to be the maximum possible sum of the lengths of
$k$ disjoint strictly decreasing subsequences of $w$.
As for Lemma~\ref{L:altRS}, the following lemma is standard for words of integers,
see for example \cite[Lemma 4.4]{BG1}.  Again we state it generalized to words
of complex numbers, and the proof goes through for this case.

\begin{Lemma} \label{L:altcRS}
Let $w$ be a word of complex numbers and let $\bp^T = (p^*_1 \ge \dots \ge
p^*_n)$ be the dual partition to $\bp = \part(\RS(w))$. Then for all
$k \ge 1$, \textctc$(w,k) = p^*_1 + \dots + p^*_k$.
\end{Lemma}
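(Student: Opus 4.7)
The plan is to reduce the statement to the classical integer case via the observation that any strictly decreasing subsequence $a_{i_1} > a_{i_2} > \cdots > a_{i_l}$ of $w$ must lie entirely in a single coset of $\Z$ in $\C$, since $a > b$ forces $a - b \in \Z_{>0}$. For $z \in \C/\Z$, let $w_z$ denote the subword of $w$ whose entries lie in $z + \Z$. Then every collection of $k$ disjoint strictly decreasing subsequences of $w$ partitions across cosets, giving
\[
\text{\textctc}(w, k) \;=\; \max_{\sum_z k_z = k} \sum_z \text{\textctc}(w_z, k_z).
\]
Each $w_z$, after translation by a representative of $z$, is a word of integers on which the algorithm of this section coincides with the classical Robinson--Schensted insertion. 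The integer case of the lemma (as in \cite[Lemma~4.4]{BG1}) therefore yields $\text{\textctc}(w_z, k_z) = (p^z)^*_1 + \cdots + (p^z)^*_{k_z}$, where $\bp^z = \part(\RS(w_z))$.

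The remaining step is the shape identity $\bp = \sum_z \bp^z$ (row-wise sum), or equivalently that $\bp^T$ is the sorted decreasing concatenation of the partitions $(\bp^z)^T$. By Lemma~\ref{L:altRS}, this reduces to the additivity $\ell(w, k) = \sum_z \ell(w_z, k)$ for all $k \ge 1$. The bound $\ell(w, k) \le \sum_z \ell(w_z, k)$ is immediate by restricting any $k$ disjoint non-decreasing subsequences of $w$ to each coset. For the reverse bound, the key point is that elements in distinct cosets are incomparable in the partial order, so the condition ``not greater'' holds vacuously across cosets; hence given optimal families $\{S^z_1, \ldots, S^z_k\}$ of disjoint non-decreasing subsequences inside each $w_z$, the unions $\bigcup_z S^z_i$ for $i = 1, \dots, k$ are pairwise disjoint non-decreasing subsequences of $w$ whose total length is $\sum_z \ell(w_z, k)$.

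Combining, the maximum in the first display equals the sum of the top $k$ entries of the multiset $\{(p^z)^*_i : z \in \C/\Z, \, i \ge 1\}$, which by the shape identity equals $p^*_1 + \cdots + p^*_k$. The main obstacle is the shape additivity $\bp = \sum_z \bp^z$: although strictly decreasing subsequences are trivially confined to single cosets, non-decreasing subsequences of $w$ can freely mix cosets, so the additive structure is not obvious and is recovered only by exploiting the incomparability of distinct cosets. As a side remark, unlike in the integer setting the columns of $\RS(w)$ need not be strictly decreasing chains (adjacent column entries can be incomparable), so the classical ``read off the columns'' proof of Greene's theorem does not directly apply, and the route through Lemma~\ref{L:altRS} and coset additivity is essentially needed.
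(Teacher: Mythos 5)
Your proof is correct and supplies an argument that the paper itself omits: the paper's "proof" of Lemma~\ref{L:altcRS} is the single assertion that the standard integer-case argument (from \cite[Lemma 4.4]{BG1}) "goes through." You instead reduce explicitly to the integer case by decomposing along cosets of $\Z$ in $\C$. Every step checks out: strictly decreasing subsequences are forced into a single coset, giving \textctc$(w,k) = \max_{\sum_z k_z = k} \sum_z$\textctc$(w_z,k_z)$; the additivity $\ell(w,k) = \sum_z \ell(w_z,k)$ is correct in both directions, the harder direction using that cross-coset incomparability makes the union of per-coset non-decreasing subsequences non-decreasing; Lemma~\ref{L:altRS} then gives the row-wise sum $\bp = \sum_z \bp^z$, which is equivalent (via the standard fact that $\#\{j : \lambda^*_j \ge m\} = \lambda_m$) to the multiset of positive column lengths of $\bp$ being the union of those of the $\bp^z$; and the final optimization over $(k_z)$ picks a prefix of each decreasing sequence $((p^z)^*_i)_i$, so the greedy choice of the $k$ largest entries of the union is realizable and equals $p^*_1 + \cdots + p^*_k$. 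The route is genuinely different from what the paper suggests, and is arguably more informative: as you observe, the columns of $\RS(w)$ for a word with entries in several cosets need not be chains under the partial order (e.g.\ $w = (0,\pi,1,\pi-1)$ produces $0$ and $\pi$ in a column), so the usual ``read the column lengths'' argument for the lower bound in Greene's theorem does not transcribe verbatim; your coset-splitting argument sidesteps this cleanly, while still leaning on Lemma~\ref{L:altRS} for the shape additivity, which is the one place non-decreasing (rather than strictly decreasing) subsequences -- and hence genuine cross-coset interaction -- must be handled.
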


Let $F$ be a frame, and
let $\bar A \in \Row(F)$.  We let
\[
\word(\bar A) =
  \{\word (B) \mid B \in \bar A \text{ and } B \text{ has non-decreasing rows}\}.
\]
Now we let $\RS(\bar A)$ be the row equivalence class
of $\RS(w)$ for some $w \in \word(\bar A)$.
Note that $\RS(\bar A)$, does not depend on the choice of $w \in \word(\bar A)$, so it is well-defined.
We record, without proof, the generalization of \cite[Theorem 4.6]{BG1} to our
current setup.
The proof is essentially the same.
In the theorem, by $\part(\RS(\bar A))$ we mean $\part(B)$ for any $B \in
\RS(\bar A)$.
Recall that a frame is {\em convex} if its left justification has connected columns.

\begin{Theorem} \label{T:recs}
Let $F$ be a convex frame and let $\bar A \in \Row(F)$.  Then $\part(\RS(\bar A))
= \part(F)$
if and only if $\bar A$ contains elements that are justified row equivalent to
column strict.
\end{Theorem}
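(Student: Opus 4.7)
My plan is to translate both implications into bounds on disjoint monotone subsequences of the row-reading word, and then to invoke Lemmas~\ref{L:altRS} and~\ref{L:altcRS} to pass between these bounds and $\part(\RS(\bar A))$. Throughout, let $B$ denote the representative of $\bar A$ with non-decreasing rows, set $w = \word(B)$, write $\part(F) = (f_1 \geq f_2 \geq \dots)$ with dual partition $(f_1^* \geq f_2^* \geq \dots)$, and let $r$ be the number of rows of $F$.

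For the ``if'' direction, by hypothesis the left-justification $\bar A_L$ contains a column strict representative $A^c$. The rows of $B$ are themselves disjoint non-decreasing subsequences of $w$, giving $\ell(w,k) \geq f_1 + \dots + f_k$ for all $k$. For the dual bound, pair the positions of $A^c$ with the positions of $B$ within each row via a bijection matching equal entries; then the first $k$ columns of $A^c$ translate into $k$ pairwise disjoint subsequences of $w$, each strictly decreasing because the columns of $A^c$ are strictly decreasing and entries from earlier rows of $B$ appear earlier in $w$. This gives $\ell_c(w,k) \geq f_1^* + \dots + f_k^*$. Writing $\bp = \part(\RS(w))$, Lemmas~\ref{L:altRS} and~\ref{L:altcRS} convert these two inequalities into $\bp \succeq \part(F)$ and $\bp^T \succeq \part(F)^T$ in dominance order; since both partitions have size $|F|$ and conjugation reverses dominance, $\bp = \part(F)$.

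For the ``only if'' direction, I would induct on the number of columns of the left-justified frame $F_L$. Convexity forces the first column of $F_L$ to have length $f_1^* = r$, so by Lemma~\ref{L:altcRS} there is a strictly decreasing subsequence $v_1 > \dots > v_r$ in $w$. Because $B$ has non-decreasing rows, no two $v_j$ can lie in the same row of $B$, and the row-reading order then forces $v_j$ to come from row $j$. Moving each $v_j$ to the leftmost position of its row (a row rearrangement, so still in $\bar A_L$) produces a representative whose first column is $(v_1, \dots, v_r)$, strictly decreasing. Deleting this column yields a word $w'$ and a convex frame $F'$ (removing a full first column of a left-justified convex frame preserves connected columns). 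Extending a maximal $(k+1)$-decreasing-subsequence cover of $w$ by $v_1, \dots, v_r$ gives $\ell_c(w',k) \geq f_2^* + \dots + f_{k+1}^*$, which combined with the elementary row bound forces $\part(\RS(\bar A')) = \part(F')$. The induction hypothesis then supplies a column strict representative of $\bar A_L'$; prepending the first column produces the required column strict representative of $\bar A_L$.

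The main obstacle is justifying the column-deletion step in the induction, specifically the claim that one may choose a maximal $(k+1)$-decreasing cover containing the particular subsequence $v_1 > \dots > v_r$. This is the complex-number analogue of the Greene-type fact used implicitly for integer words in \cite[Theorem 4.6]{BG1}, and it follows from the standard nestedness of maximal decreasing covers, which extends to the partial order $\leq$ without modification. All other steps generalize from integer to complex entries automatically, since they rely only on this partial order and on Lemmas~\ref{L:altRS} and~\ref{L:altcRS}, which are already stated for complex words.
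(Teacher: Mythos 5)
Note first that the paper does not actually prove Theorem~\ref{T:recs}: it says ``we record, without proof,'' and points to \cite[Theorem~4.6]{BG1}, asserting that the argument there generalizes verbatim from integer to complex entries. So there is no in-paper proof to compare yours against; I can only assess your argument on its own merits.

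Your ``if'' direction is correct and is the cleanest way to argue it: the rows of the non-decreasing representative give $\ell(w,k)\ge f_1+\dots+f_k$, the columns of a column strict representative give \textctc$(w,k)\ge f_1^*+\dots+f_k^*$, and Lemmas~\ref{L:altRS} and~\ref{L:altcRS} together with the fact that conjugation reverses dominance on partitions of fixed size pin down $\part(\RS(w))=\part(F)$.

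Your ``only if'' direction, however, has a genuine gap, and it is exactly the one you flag. The ``standard nestedness of maximal decreasing covers'' you invoke is false: a maximum decreasing subsequence need not be contained in any maximum family of $k+1$ disjoint decreasing subsequences, even when it takes exactly one entry from each row. Concretely, take $F$ to be the left-justified $2\times 2$ frame and $\bar A$ with rows $\{1,3\}$ and $\{0,2\}$, so $w=1,3,0,2$. One checks $\part(\RS(w))=(2,2)=\part(F)$, so the hypothesis of the ``only if'' direction holds, and indeed the arrangement with rows $(1,3)$ and $(0,2)$ is column strict. But the maximal decreasing subsequence $v_1=3$, $v_2=0$ (one from each row, as your argument produces) does \emph{not} extend to a maximum $2$-cover: the unique maximum $2$-cover of $w$ is $\{(3,2),(1,0)\}$, which does not contain $(3,0)$. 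Running your induction with this choice of $v$ gives $w'$ a word with entries $\{1\}$ in row $1$ and $\{2\}$ in row $2$, for which $\ell_c(w',1)=1<2=f_2^*$; hence $\part(\RS(w'))=(2)\ne(1,1)=\part(F')$ and the induction hypothesis does not apply. The theorem remains true here because a \emph{different} choice of $v$ (namely $(3,2)$ or $(1,0)$) would work, but nothing in your argument selects such a $v$. To repair the induction you would need to prove that some decreasing $v_1>\dots>v_r$, one per row, can be chosen so that deleting it leaves $\part(\RS(w'))=\part(F')$ — a nontrivial selection argument that does not follow from Greene's theorem alone and is where the real content of \cite[Theorem~4.6]{BG1} must live.

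Two smaller points: the assertion that ``convexity forces the first column of $F_L$ to have length $r$'' is not a convexity fact — it is just that every row of a frame is nonempty; and you do need convexity later, both to ensure $F'$ is again convex and, more importantly, to make the dominance squeeze in the ``if'' direction bite when re-applied inside the induction, so be explicit about where it enters.
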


\section{The component group action, row swapping, and cosets of $\C/\Z$}

Given two s-tables $A$ and $B$, let $A . B$ denote the concatenation of $A$ and
$B$, that is
$A.B$ is the s-table where
each row of $A . B$ is the concatenation of the corresponding rows of $A$ and
$B$.
We also define the concatenation of two row equivalence classes of s-tables in
the obvious way.

Given an s-table $A$ and $z \in \C$, we let $A_{\pm z}$ denote the diagram formed by
taking
the entries in each row of $A$ which belong to $\pm z + \Z$ to form the corresponding
row of $A_{\pm z}$.
So $ A_{\pm z_1} . A_{\pm z_2} . \cdots . A_{\pm z_k} \in \bar A$ where
$\pm z_1 + \Z, \dots, \pm z_k + \Z$ are distinct, and contain all the cosets of $\C/\Z$
present in $A$.
In order for
$A_{\pm z}$ to necessarily be an s-table,
we need to generalize our definition of s-frames to include diagrams with empty rows.
To avoid confusion, we fix a symmetric pyramid $P$ with $2r$ rows as in the introduction.
Now we consider s-frames that have precisely $2r$ rows, some of which may be empty.
We continue to label the rows of s-frames and their related s-tables with
$1, \dots, r, -r, \dots, 1$ from top to bottom, even if the outermost rows happen to be empty.

We require the row swapping operators $\bar s_1 \star , \dots, \bar s_{r-1} \star$ defined
in \cite[\S5.1]{BG2}.
Also, recall the generators $c_1, \dots, c_d$ of $\tilde C$ defined in \cite[\S5.5]{BG2},
where $d$ is the number of distinct odd (even) parts of the Jordan type of $e$
in the case that
$\g = \so_{2n}$ ($\g=\sp_{2n}$) respectively.
Furthermore, an action of $\tilde C$ on a certain subset of $\sRow(P)$ is defined
in the following manner:
Let $\bp = (p_1^2 \ge \dots \ge p_r^2)$ be the Jordan type of $e$.  Let
$p_{i_1} > p_{i_2} > \dots > p_{i_d}$ be the distinct odd (even) parts of $\bp$
in the case that
$\g = \so_{2n}$ ($\g=\sp_{2n}$) respectively, such that $i_k$ is
maximal in $\{ j \mid p_j = p_{i_k} \}$.
Now we define an action of $\tilde C$ on a certain subset of $\sRow(P)$ by declaring that
$ c_j \cdot \bar A = \bar s_{i_j} \star \bar s_{i_j+1} \star \dots \bar s_{r-1} \star (c \cdot (\bar s_{r-1} \star \dots \bar s_{i_j+1} \star \bar s_{i_j} \star \bar A))$,
where $c$ is the operator defined in \cite[\S5.5]{BG2}.
We note that the action of $c$ depends on whether $\g = \so_{2n}$ or $\sp_{2n}$.

We note that the definitions of $\bar s_i \star$ and $c \cdot$ work perfectly well
on diagrams with empty rows. For example, if row $i+1$ of $A$ is empty, then
$\bar s_i \star \bar A$ has row $i$ empty, and row $i+1$ equal to row $i$ of $\bar A$.
Also, if the middle two rows of $\bar A$ are empty, then $c \cdot \bar A = \bar A$.
Thus if $\bar A \in \sRow(P)$ and $z \in \C$, then we can define an action of
$c_j$ on $\bar A_{\pm z}$ via $c_j \cdot \bar A_{\pm z} = \bar s_{i_j} \star \dots \star \bar s_{r-1} \star
   (c \cdot (\bar s_{r-1} \star \dots \star \bar s_{i_j} \star \bar A_{\pm z}))$, provided everything
is defined.

The lemma below follows immediately from the definition of $\bar s_i \star$.

\begin{Lemma} \label{L:split1}
Let $A \in \sRow(F)$ for some s-frame $F$.
\begin{itemize}
\item[(i)]
   $\bar s_i \star \bar A$ is defined if and only if $\bar s_i \star \bar A_{\pm z}$ is defined
for all $z \in \C$.
\item[(ii)]
     If $\bar s_i \star \bar A$ is defined, then
  $$
  \bar s_i \star \bar A =  \bar s_i \star \bar A_{\pm z_1} . \bar s_i \star \bar A_{\pm z_2} . \cdots . \bar s_i \star \bar
A_{\pm z_k},
$$ where
$\pm z_1 + \Z, \dots, \pm z_k + \Z$ are distinct, and contain all the cosets of $\C/\Z$
present in $A$.
\end{itemize}
\end{Lemma}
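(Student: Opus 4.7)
The plan is to unwind the definition of $\bar s_i \star$ from \cite[\S5.1]{BG2} and observe that every condition appearing in that definition is local to a single coset of $\C/\Z$. The operator $\bar s_i \star$ is constructed by selecting a prescribed collection of entries from rows $i$ and $i+1$ (and by skew-symmetry the corresponding entries in rows $-i$ and $-(i+1)$) and swapping them, where the selection is governed entirely by the partial order $\le$ on $\C$. The decisive observation is that $a \le b$ requires $b-a \in \Z_{\ge 0}$, and hence $a$ and $b$ lie in a common coset $\pm z + \Z$, so no comparison ever involves entries from two distinct cosets.

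For part (i), I would argue that the condition for $\bar s_i \star \bar A$ to be defined requires finding a compatible matching between entries of rows $i, i+1$ (and their negatives) so that prescribed row lengths can be maintained after the swap. Since entries from different cosets are never comparable under $\le$, the requisite matching decomposes into a separate matching on each $\bar A_{\pm z}$, and the existence of the global matching is equivalent to the simultaneous existence of each of these pieces. This yields the biconditional in (i).

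For part (ii), given that all the coset-wise matchings exist, the output of $\bar s_i \star$ on each row is the disjoint union over $\pm z + \Z$ of the corresponding outputs on $\bar A_{\pm z}$, since the operator introduces no cross-coset comparisons and (as we work with row-equivalence classes) is insensitive to the internal ordering within a row. Concatenating these coset-wise results into a single s-table produces exactly the formula asserted in (ii). The only real work is a careful inspection of the case analysis in the definition of $\bar s_i \star$ from \cite{BG2}; because nothing in that definition crosses coset boundaries, this poses no genuine obstacle, which is consistent with the authors' labelling of the lemma as immediate from the definition.
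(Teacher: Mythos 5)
Your argument is correct and is exactly the reasoning the authors have in mind: they give no proof at all, stating only that the lemma ``follows immediately from the definition of $\bar s_i \star$,'' and your observation that every comparison in that definition uses the partial order $\le$ (so never crosses cosets of $\C/\Z$, and respects the $\pm$ pairing forced by skew-symmetry) is precisely why it is immediate. You have simply made explicit what the paper takes as read.
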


The action of $\tilde C$ on
$\bar A \in \sRow(P)$ when $L(\bar A,\q)$ is finite dimensional is different when
$\g = \so_{2n}$ from when $\g = \sp_{2n}$, i.e.\ given a word $w$ in the $c_i$'s, we mean
something different by $w \cdot \bar A$ depending on whether
$\g =  \so_{2n}$ or $\g = \sp_{2n}$.
In order to differentiate them, for $w \in \tilde C$, we use $w \dop \bar A$ to denote the action
when $\g = \so_{2n}$ and
$w \cop \bar A$ to denote the action when $\g = \sp_{2n}$.
For notational convenience, we let
$\sRow_+^\diamond(P) = \sRow^\diamond(P)$
    and $\sRow_-^\diamond(P) = \sRow^{\diamond,+}(P)$, where $\sRow^\diamond(P)$
    and $\sRow^{\diamond,+}(P)$ are as defined in the introduction.

The next lemma is clear for the case that $A$ has two rows and then Lemma~\ref{L:split1}
implies the general case.

\begin{Lemma} \label{L:split2}
Let $\g = \so_{2n}$,
let $w \in \tilde C$, and let $A \in \sRow_+^\diamond(P)$.
\begin{itemize}
\item[(i)]
   $w \dop \bar A$ is defined if and only if $w \dop \bar A_{\pm z}$ is defined for
all $z \in \C$.
\item[(ii)]
     If $w \dop \bar A$ is defined, then
  $$w \dop \bar A =  (w \dop \bar A_{\pm z_1}) . (w \dop \bar A_{\pm z_2}) . \cdots . (w \dop \bar A_{\pm z_k}),$$
where
$\pm z_1 + \Z, \dots, \pm z_k + \Z$ are distinct, and contain all the cosets of $\C/\Z$
present in $A$.
\end{itemize}
\end{Lemma}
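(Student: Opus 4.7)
The plan is to reduce the statement to two special cases and then combine them. First, by the definition of the $\tilde C$-action from \cite[\S5.5]{BG2}, each generator acts as
\[
c_j \dop \bar A = \bar s_{i_j} \star \cdots \star \bar s_{r-1} \star \bigl(c \dop (\bar s_{r-1} \star \cdots \star \bar s_{i_j} \star \bar A)\bigr),
\]
and a general $w \in \tilde C$ is a composition of such sandwich expressions. Lemma~\ref{L:split1} tells us that $\bar s_i \star \bar A$ is defined if and only if $\bar s_i \star \bar A_{\pm z}$ is defined for every $z \in \C$, and that, when defined, $\bar s_i \star$ distributes over concatenation by cosets. Consequently, after peeling off the outer strings of $\bar s_i \star$ operators it suffices to verify the lemma for the single operator $c \dop$ acting on the middle two rows of a table, which is the two-row case alluded to in the statement.

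In the two-row setting for $\g = \so_{2n}$, the definition of $c \dop$ from \cite[\S5.5]{BG2} prescribes a combinatorial operation whose effect on each row can be read off coset by coset: the entries from distinct classes $\pm z_i + \Z$ never interact, because the recipe only compares entries belonging to the same class $\pm z + \Z$ (matching entries against their negatives and moving them between the two rows). From this one reads off directly that $c \dop \bar A$ is defined exactly when each $c \dop \bar A_{\pm z}$ is defined and, when defined, equals the concatenation of the $c \dop \bar A_{\pm z_k}$ over all cosets present in $A$. With this two-row/single-operator base case in hand, the general statements (i) and (ii) follow by induction on the length of a word $w = c_{j_1} \cdots c_{j_\ell}$ in the generators: the inductive hypothesis supplies the cosetwise splitting for $c_{j_2} \cdots c_{j_\ell} \dop \bar A$, and Lemma~\ref{L:split1} together with the base case lets us commute the outermost $c_{j_1} \dop$ through the concatenation.

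The main obstacle I expect is the bookkeeping around the domain conditions. One must verify that each intermediate table produced during the sandwich computation, and during the inductive step, continues to satisfy the structural hypotheses needed for the next operator to be defined; equivalently, one must check that the conditions defining $\sRow_+^\diamond(P)$ propagate through the $\bar s_i \star$ row-swaps in a cosetwise compatible manner. Modulo this verification, the argument is essentially a formal consequence of Lemma~\ref{L:split1} and the coset-locality of the base operator $c \dop$.
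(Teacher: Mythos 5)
Your proposal is correct and takes essentially the same route as the paper: the paper's entire justification is that the lemma is clear in the two-row case (where only the middle operator $c$ is involved and it acts coset-locally), and Lemma~\ref{L:split1} then handles the outer row-swap operators $\bar s_i \star$ to yield the general case. Your write-up fleshes out this reduction in more detail and sensibly flags the need to track domain conditions through the intermediate tables, but the structure of the argument coincides with the paper's.
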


The lemma below is now a consequence of Lemma~\ref{L:split2} and the definition
of the action of $c$.  It explains how to work out $w \cop \bar A$ from the
rule for $\dop$.  We first consider $\bar{(A^+)}$, as defined in the introduction,
and calculate $w \dop \bar{(A^+)}$. Then if  we  remove from $w \dop \bar{(A^+)}$ the boxes that were added
to $A$ to obtain $A^+$, then the resulting row equivalence class is $w \cop \bar A$.
For this lemma it is necessary to observe that these added boxes are never
moved in the row swapping operations, and they are also not affected by the action of $c$.

\begin{Lemma} \label{L:ctod}
Let $\g = \sp_{2n}$,
let $w \in \tilde C$, and let $A \in \sRow_-^\diamond(P)$.
Then $(w \cop \bar A)^+ = w \dop \bar{(A^+)}$.
\end{Lemma}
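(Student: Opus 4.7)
The plan is to proceed by induction on the length of $w$ expressed as a word in the generators $c_1, \dots, c_d$ of $\tilde C$, thereby reducing to the case of a single generator $c_j$. Unpacking the definitions of $c_j \cop$ and $c_j \dop$, the identity $(c_j \cop \bar A)^+ = c_j \dop \bar{(A^+)}$ reduces to two compatibility assertions: first, that adding boxes commutes with each row-swapping operator, i.e.\ $(\bar s_i \star \bar B)^+ = \bar s_i \star \bar{(B^+)}$ for all $i$ and every intermediate row-class $\bar B$; and second, that adding boxes converts the $\sp_{2n}$-realization of the operator $c$ into its $\so_{2n}$-realization, i.e.\ $(c \cdot \bar B)^+ = c \cdot \bar{(B^+)}$, where on the left $c$ is the operator for $\sp_{2n}$ and on the right $c$ is the operator for $\so_{2n}$.

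For the first assertion, I would invoke Lemma~\ref{L:split1} to split by cosets of $\C/\Z$. Since the added boxes carry integer entries ($i-1$ in row $i$ and $1-i$ in row $-i$), there is nothing to check outside the integral coset. Within the integral coset, the added entries in rows $i$ and $i+1$ are the consecutive integers $i-1$ and $i$, placed canonically at the extremal positions; they pair off under the row-swapping rule from \cite[\S5.1]{BG2} and are left in place. For the second assertion, I would use that $c$, by its definition in \cite[\S5.5]{BG2}, only modifies the two innermost rows, which in $\bar{(B^+)}$ contain an extra pair of zeros coming from the added boxes; the discrepancy between the $\so_{2n}$ and $\sp_{2n}$ versions of $c$ is precisely a parity shift in those innermost rows, and this extra pair of zeros absorbs that shift while itself remaining fixed. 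This is essentially the reason for introducing the operation $A \mapsto A^+$ in the first place.

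The main obstacle is bookkeeping in nature: using the precise definitions of $\bar s_i \star$ and $c$ from \cite{BG2}, one must verify carefully in every intermediate configuration that the added boxes are genuinely inert, so that deleting them from the output of the $\dop$-computation recovers exactly the output of the $\cop$-computation. Once this inertness is checked, the case of a single generator is settled, and the induction on word length then yields the lemma.
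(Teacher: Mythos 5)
Your overall strategy matches the paper's: both reduce the lemma to the observation that the boxes added in passing from $A$ to $A^+$ are ``inert'' under each $\bar s_i \star$ and under the operator $c$, and both use the coset decomposition (your Lemma~\ref{L:split1}, the paper's Lemma~\ref{L:split2}) to isolate the integral coset. The paper states these inertness observations without further justification; you attempt to give one, which is where a concrete problem appears.

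Your justification of the second assertion (compatibility with $c$) relies on the claim that the two rows $c$ acts on contain ``an extra pair of zeros coming from the added boxes.'' This contradicts the definition of $A^+$ and the row-labeling conventions of the paper. The rows of an s-frame are labeled $1,\dots,r,-r,\dots,-1$ from top to bottom, so the rows adjacent to the $x$-axis on which $c$ acts (``the middle two rows'') are rows $r$ and $-r$. By the definition of $A^+$, the added entry in row $i$ is $i-1$, so rows $r$ and $-r$ receive $r-1$ and $1-r$, which are not zero unless $r=1$; the zeros land in the \emph{outermost} rows $1$ and $-1$, which $c$ does not touch. Thus the mechanism you propose (``this extra pair of zeros absorbs that shift while itself remaining fixed'') does not apply as stated, and the argument for the identity $(c\cdot\bar B)^+=c\cdot\bar{(B^+)}$ — with $c$ interpreted in the $\sp$-sense on the left and the $\so$-sense on the right — would need to be redone with the actual added entries $r-1$, $1-r$. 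The first assertion's ``pair off'' heuristic for $\bar s_i \star$ is also not spelled out precisely, but there you at least use the correct added entries $i-1$ and $i$ in rows $i$ and $i+1$; the error is specific to the $c$ step.
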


In the following lemma $s_{r,-r} \star $ denotes the row swapping operator for {\em tables}
(not s-tables) from \cite[\S4.2]{BG2},
acting on a table which has adjacent rows labeled with $r$ and $-r$.
The lemma is a direct consequence of the
  definitions of $c \cdot$ and $s_{r,-r} \star$.

\begin{Lemma} \label{L:sharpknuth}
    Let $\g = \so_{2n}$, let $\bar A \in \sRow(P)$, and suppose that $\bar A= \bar A_{\pm z}$ for some $z \in \C \setminus \frac{1}{2} \Z$.
\begin{itemize}
 \item[(i)] $s_{r,-r} \star \bar A$ is defined if and only if $c \cdot \bar A$ is defined.
\item[(ii)]
     If
    $s_{r,-r} \star \bar A$ is defined, then
   $s_{r,-r} \star \bar A_z = (c \cdot \bar A)_z$.
\end{itemize}
\end{Lemma}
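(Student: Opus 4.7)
The plan is to reduce to the two-row case and then unwind the definitions. Both $c \cdot$ and $s_{r,-r}\star$ only modify rows $r$ and $-r$ of an s-table, leaving all other rows untouched, so we may immediately reduce to the case where rows $\pm r$ are the only non-empty rows of $\bar A$. Under the hypothesis $\bar A = \bar A_{\pm z}$ with $z \notin \frac{1}{2}\Z$, every entry of $A$ lies in the disjoint union $(z + \Z) \sqcup (-z + \Z)$, and skew-symmetry of the s-table identifies the $-z+\Z$-entries in row $-r$ with the negatives of the $z+\Z$-entries in row $r$, and vice versa. Thus the entire s-table structure is encoded by the distribution of $z + \Z$-entries in rows $\pm r$.

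Next I would recall the definition of $c \cdot$ in the $\g = \so_{2n}$ case from \cite[\S5.5]{BG2} and unpack it in this setting. The claim I would verify is that its effect on the $z$-coset part of the middle two rows is precisely the table row-swap $s_{r,-r} \star$ from \cite[\S4.2]{BG2}: it sorts the combined multiset of $z + \Z$-entries sitting across rows $r$ and $-r$ and redistributes them subject to the original row-length constraints, with the mirror redistribution of $-z + \Z$-entries forced by skew-symmetry. The hypothesis $z \notin \frac{1}{2}\Z$ is what makes this clean, because it keeps $z + \Z$ and $-z + \Z$ disjoint and non-self-paired, so that $c \cdot$ acts block-diagonally across the two cosets.

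For (i), entries from distinct cosets modulo $\Z$ are $\le$-incomparable, so the row-swap $s_{r,-r} \star$ itself decomposes over cosets; hence $s_{r,-r} \star \bar A$ is defined if and only if both $s_{r,-r} \star \bar A_z$ and $s_{r,-r} \star \bar A_{-z}$ are defined, and these two conditions coincide under skew-symmetric reflection. By the identification of the previous paragraph, this matches defined-ness of $c \cdot \bar A$. Part (ii) then follows from the same identification, which directly gives $s_{r,-r} \star \bar A_z = (c \cdot \bar A)_z$ once both sides exist. The main obstacle is the explicit bookkeeping needed to match the definition of $c$ from \cite{BG2} with the table row-swap restricted to the $z$-block; once that identification is carried out in the two-row, single-coset-pair setting, both (i) and (ii) follow without further work.
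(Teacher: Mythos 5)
Your proposal is correct and takes essentially the same approach as the paper: the paper's entire proof is the single sentence ``The lemma is a direct consequence of the definitions of $c \cdot$ and $s_{r,-r} \star$,'' and your sketch is exactly the right way to flesh that out --- reduce to rows $\pm r$, use $\le$-incomparability across cosets mod $\Z$ to split the swap over $z + \Z$ and $-z + \Z$, and use skew-symmetry to identify the two halves.
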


Our next lemma and the subsequent corollary consider the part of an element of $\sRow_{\phi}^\diamond(P)$
in the upper half-plane,
where $\phi = \pm$.
Recall, from \cite[\S4]{BG1}, that for an s-table $A$,
we write $A_+$ for the subtable of $A$ consisting of the boxes in the upper half-plane.

\begin{Lemma} \label{L:plus}
 Let $\g = \so_{2n}$.
\begin{itemize}
 \item [(i)]
  If $\bar A \in \sRow_+^\diamond(P)$ has all integral or all half-integral entries,
  then
  $(w \dop \bar A)_+$  is justified row equivalent to column strict for all $w \in
   \tilde C$.
 \item [(ii)]
   If $\bar A \in \sRow_+^\diamond(P)$ has
all entries in $\pm z + \Z$ for some $z \in \C \setminus \frac{1}{2}\Z $,
  then
  $(w \dop \bar A)_+$  is justified row equivalent to column strict for all $w \in
   \tilde C$.
\end{itemize}
\end{Lemma}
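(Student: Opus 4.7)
The plan is to use Theorem~\ref{T:recs} to translate the ``justified row equivalent to column strict'' property of the upper half into a statement about the partition shape of the RS-tableau. Specifically, for a convex frame $F$ (here $F = P_+$ is the upper half of the symmetric pyramid $P$), the condition $\part(\RS((w \dop \bar A)_+)) = \part(F)$ is equivalent to $(w \dop \bar A)_+$ being justified row equivalent to column strict. Condition (M4) together with Theorem~\ref{T:recs} supplies this equality for $\bar A$ itself, and the goal is to show that it is preserved under the $\tilde C$-action on the upper half.

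For part (i), all entries lie in a single $\Z$-coset. The structure of the $\tilde C$-action, built from the row-swap operators $\bar s_i \star$ and the operator $c$, depends only on the partial-order comparisons between entries and not on their specific values. A uniform shift by $-\tfrac{1}{2}$ therefore commutes with the $\tilde C$-action and reduces the half-integral subcase to the integral one, which is precisely the setting of \cite{BG1}. The conclusion then follows by quoting the corresponding statement proved there for the component-group action on s-tables with integral entries.

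For part (ii), the hypothesis $z \not\in \tfrac{1}{2}\Z$ forces $\pm z + \Z$ to split into the two distinct cosets $z+\Z$ and $-z+\Z$, whose elements are mutually incomparable in $\le$. The generator $c_j \dop$ is a composition of row-swap operators $\bar s_{i_j}\star,\dots,\bar s_{r-1}\star$ conjugating $c$. Each $\bar s_i \star$ with $i<r$ acts on the upper half as an ordinary table row-swap between adjacent rows, which is a Knuth-type move and so preserves the RS shape of the upper half. The only operator that could a priori mix upper and lower halves is $c$; but by Lemma~\ref{L:sharpknuth}(ii), restricted to each coset $\bar A_z$ the action of $c$ coincides with the table row-swap $s_{r,-r} \star$, which is again a Knuth-type move preserving the RS shape. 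Using Lemmas~\ref{L:split1} and~\ref{L:split2} to decompose $\bar A$ coset-by-coset and combining the two contributions (which remain ``independent'' in the RS algorithm, since entries from $z+\Z$ and $-z+\Z$ are incomparable and so never bump each other), one obtains that $(w \dop \bar A)_+$ is Knuth-equivalent to $\bar A_+$. Hence $\part(\RS((w \dop \bar A)_+)) = \part(\RS(\bar A_+)) = \part(P_+)$, and Theorem~\ref{T:recs} yields the conclusion.

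The main technical obstacle will be the Knuth-equivalence step in part (ii): the operator $s_{r,-r} \star$ crosses the upper/lower boundary, so preservation of the RS shape of the \emph{full} table does not automatically give preservation of the RS shape of the \emph{upper-half} subtable. The argument must use that entries in $z+\Z$ and $-z+\Z$ are incomparable to confine the effect of the row-swap, coset-by-coset, in a manner compatible with truncation to the upper half; formalising this careful bookkeeping with Lemmas~\ref{L:split1}--\ref{L:split2} is where the real work lies. For part (i), the corresponding obstacle is locating and citing the precise statement in \cite{BG1} that applies after the half-integral shift.
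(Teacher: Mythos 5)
Your framing via Theorem~\ref{T:recs} and RS shapes is the right one and agrees in spirit with the paper, but both halves of your argument have gaps.

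For part~(i), the shift by $-\tfrac{1}{2}$ does not make sense in this setting. An s-table is required to be skew-symmetric, and if $A$ has entries $a_{i,j}$ with $a_{-i,-j}=-a_{i,j}$, then after shifting by $-\tfrac{1}{2}$ one gets $a'_{-i,-j} = -a'_{i,j}-1 \neq -a'_{i,j}$, so the result is not an s-table at all and the operator $c$ (which implicitly uses the skew-symmetry, e.g.\ through the $\sharp$-element and the swap of rows $r$ and $-r$) is simply not defined on the shifted diagram. The shift is also unnecessary: the classification in~\cite{BG1} is stated for \emph{integral central character}, i.e.\ all entries in $\tfrac{1}{2}\Z$, which already covers both the integral and half-integral subcases at once. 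What the paper actually does here is representation-theoretic rather than purely combinatorial: \cite[Theorem~5.13]{BG1} gives that $L(w\dop\bar A,\q)$ is finite dimensional for all $w\in\tilde C$, so $L^\s(w\dop\bar A,\q^\s)$ is finite dimensional for the full subalgebra $\s = \langle\sum_{\row(i)=j}f_{i,i}\mid j=1,\dots,r\rangle$, and then \cite[Theorem~7.9]{BK2} (the type~A combinatorial criterion) gives the ``justified row equivalent to column strict'' property of the upper half. Your proposal neither appeals to \cite{BK2} nor identifies a purely combinatorial statement in \cite{BG1} to quote, so the route is genuinely incomplete.

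For part~(ii), the central claim that $(w\dop\bar A)_+$ is Knuth-equivalent to $\bar A_+$ cannot be right as stated: the operator $c$ (via $s_{r,-r}\star$) can change which boxes lie in rows $r$ and $-r$, so $(w\dop\bar A)_+$ and $\bar A_+$ need not even have the same frame, let alone be Knuth-equivalent. You correctly identify this as the technical obstacle, but it is not ``bookkeeping'' to be deferred; it is the substance. The paper sidesteps it by never truncating to the upper half inside the inductive argument. Instead it works with the full two-sided column $w\cdot\bar A_z$: one observes that each $\bar s_i\star$ with $i<r$ preserves \textctc$(\word(\cdot),k)$ for all $k$, and that the action of $c$ (for entries in $\pm z+\Z$, $z\notin\tfrac{1}{2}\Z$) does not destroy any strictly decreasing chain of $\word(B_z)$; combined with Lemma~\ref{L:altcRS} and Theorem~\ref{T:recs}, this shows $w\cdot\bar A_z$ is justified row equivalent to column strict, so its restriction to the upper half is too, and incomparability of $z+\Z$ and $-z+\Z$ then gives the result for $(w\dop\bar A)_+$. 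Note also that condition~(M3) enters here to guarantee the convexity hypotheses needed for Theorem~\ref{T:recs} to apply after the $\tilde C$-action; your proposal never uses (M3), which is another sign the argument is not yet complete.
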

\begin{proof}
To prove (i), note that by \cite[Theorem 5.13]{BG1}
$L(w \dop \bar A,\q)$ is finite dimensional
    for all $w \in \tilde C$.
    Thus $L^\s( w \dop \bar A, \q^\s)$ is finite dimensional for $\s$ the full
subalgebra equal to
$\langle \sum_{\row(i) = j} f_{i,i} \mid j = 1, \dots, r \rangle$,
    which implies that $(w \dop \bar A)_+$ contains justified row equivalent to column strict elements by
\cite[Theorem 7.9]{BK2}.

To see why (ii) is true,
first recall that
\textctc$(\word(\bar A_z),k) = $
\textctc$(\word(\bar s_i \star \bar A_z),k)$
for any $i$ and $k$.
Now observe that if $B$ is any s-table with entries all in $\pm z + \Z$ for which $c \cdot \bar B$ is defined,
any descending chain in $\word(B_z)$
also occurs in $\word(c \cdot B_z)$.  Thus by Lemma~\ref{L:altcRS} and Theorem~\ref{T:recs} we have that
$w \cdot \bar A_z$ contains justified column strict elements.
Now the fact that $\bar A_+$ must contain justified column strict elements follows from
condition (M3) from the definition of $\sTab^\diamond(P)$.
\end{proof}

\begin{Corollary} \label{C:plus}
   Let $\phi = \pm$ and let $\bar A \in \sRow_\phi^\diamond(P)$.
Then
\begin{itemize}
 \item[(i)] $w \phiop \bar A$ is defined for all $w \in \tilde C$;
\item[(ii)] $(w \phiop \bar A)_+$  is justified row equivalent to column strict for all $w \in
\tilde C$.
\end{itemize}
\end{Corollary}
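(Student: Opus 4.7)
The plan is to handle the two cases $\phi = +$ (so $\g = \so_{2n}$) and $\phi = -$ (so $\g = \sp_{2n}$) in turn. For $\phi = +$, I will decompose $\bar A$ into its coset pieces $\bar A_{\pm z}$, apply Lemma~\ref{L:plus} to each piece, and recombine using Lemma~\ref{L:split2}. For $\phi = -$, I will pass to the augmented table $A^+$, invoke the $\phi = +$ case there, and translate back using Lemma~\ref{L:ctod}.

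For $\phi = +$, let $\pm z_1 + \Z, \dots, \pm z_k + \Z$ be the cosets of $\C/\Z$ that appear in $\bar A$. Each piece $\bar A_{\pm z_j}$ is an s-table (with some rows possibly empty) whose entries all lie in a single coset, and the conditions (M1)--(M4) on $\bar A$ restrict coset by coset to the corresponding conditions on $\bar A_{\pm z_j}$. Hence Lemma~\ref{L:plus}(i) applies if $z_j \in \frac{1}{2}\Z$ and Lemma~\ref{L:plus}(ii) applies otherwise; in either case $w \dop \bar A_{\pm z_j}$ is defined and $(w \dop \bar A_{\pm z_j})_+$ is justified row equivalent to column strict. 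Lemma~\ref{L:split2}(i) then yields (i), and Lemma~\ref{L:split2}(ii) gives the decomposition
\[
(w \dop \bar A)_+ = (w \dop \bar A_{\pm z_1})_+ . (w \dop \bar A_{\pm z_2})_+ . \cdots . (w \dop \bar A_{\pm z_k})_+ .
\]
Since entries from distinct cosets are incomparable under $\le$, every column of a column strict arrangement lies in a single coset, so column strictness of the concatenation amounts to column strictness of each piece; this yields~(ii).

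For $\phi = -$, take $\bar A \in \sRow_-^\diamond(P) = \sRow^{\diamond,+}(P)$. By definition $\bar{(A^+)} \in \sRow^\diamond(P^+) = \sRow_+^\diamond(P^+)$, so applying the $\phi = +$ case to $\bar{(A^+)}$ shows that $w \dop \bar{(A^+)}$ is defined and $(w \dop \bar{(A^+)})_+$ is justified row equivalent to column strict. By Lemma~\ref{L:ctod}, $w \cop \bar A$ is well-defined as the row equivalence class obtained from $w \dop \bar{(A^+)}$ by removing the added boxes, proving (i). Part (ii) then reduces to a coset-by-coset check: the non-integer coset blocks of $(w \cop \bar A)_+$ agree with those of $(w \dop \bar{(A^+)})_+$ (the extras are integers) and so inherit the column strict property directly. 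The principal obstacle is the integer coset: one must show that removing the extras $0, 1, \dots, r-1$ (in rows $1, \dots, r$) from the integer block of $(w \dop \bar{(A^+)})_+$ preserves the justified row equivalent to column strict property. The key observation is that these extras are strictly increasing down the rows, so in any column strict arrangement of the integer block they must occupy $r$ distinct columns; combined with convexity and Theorem~\ref{T:recs}, this rigidity will allow the extras to be removed and the remaining entries rearranged to give a column strict diagram of the correct shape.
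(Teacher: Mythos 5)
Your $\phi = +$ argument is correct and matches the paper's intent: decompose $\bar A$ by cosets, apply Lemma~\ref{L:plus}(i) or (ii) piece by piece, and recombine via Lemma~\ref{L:split2}, noting (as you do) that entries from distinct cosets are incomparable under $\le$ so column strictness concatenates. Part~(i) of the $\phi = -$ case, via $A^+$ and Lemma~\ref{L:ctod}, is also fine.

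Where you run into trouble is part~(ii) in the integral coset for $\phi = -$. You want to show that if the integer block of $(w \dop \bar{(A^+)})_+$ is justified row equivalent to column strict, then so is the integer block of $(w \cop \bar A)_+$, obtained by removing one entry ($j-1$ from row $j$) from each row. Your argument --- the extras are strictly increasing, hence sit in $r$ distinct columns, and ``rigidity'' plus convexity plus Theorem~\ref{T:recs} should let them be excised --- is not actually a proof, and it is not clear it can be completed as stated. The na\"ive bound from Lemma~\ref{L:altcRS} goes the wrong way: removing the $r$ increasing extras can cut \textctc$(w,k)$ by as much as $\min(k,r)$, whereas you would need it to drop by exactly the amount matching the one fewer box per row; the $k=1$ case already fails to close without extra structure. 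The cleaner route, consistent with the paper's citation of Lemma~\ref{L:plus} in the $\phi = -$ case, is to treat the integral and half-integral cosets directly on $A$ (not via $A^+$), using the $\sp_{2n}$ analogue of Lemma~\ref{L:plus}(i): since $\bar A_0$ (resp.\ $\bar A_{1/2}$) has all integral (resp.\ half-integral) entries and lies in $\sRow^{\diamond,+}$ of the relevant pyramid, \cite[Theorem~5.13]{BG1} for $\sp_{2n}$ gives finite dimensionality of $L(w \cop \bar A_0, \q)$, and then \cite[Theorem~7.9]{BK2} gives the column strictness of $(w \cop \bar A_0)_+$, exactly as in the proof of Lemma~\ref{L:plus}(i). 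The detour through $A^+$ is then needed only for the non-integral cosets, where the extras play no role and your argument goes through unchanged.
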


\begin{proof}
In the case $\phi = +$ this follows from Lemmas~\ref{L:split2} and \ref{L:plus}.
The case $\phi = -$ follows from the case $\phi = +$, and Lemmas~\ref{L:ctod} and
\ref{L:plus}.
\end{proof}

Let $\phi = \pm$ and let $\bar A \in \sRow_\phi^\diamond (P)$.
We use $L(\bar A_0,\q')$ to denote the irreducible $U(\g',e')$-module corresponding
to $\bar A_0$,
where $\g'$ has the same type as $\g$, the parabolic subalgebra
$\q'$ of $\g'$ defined in the same manner as $\q$ is for $\g$, and
$\part(e') = \part(\bar A_0)$.
We use $L_{\operatorname D}(\bar A_{1/2},\q'')$ to denote the irreducible
$U(\g'',e'')$-module
corresponding to $\bar A_{1/2}$,  where $\g''$ is of type D,
the parabolic subalgebra $\q''$ of $\g''$ is defined in the same manner as $\q$ is for $\g$,
and $\part(e'') = \part(\bar A_{1/2})$.

\begin{Lemma} \label{L:intcosets}
Let $\bar A \in \sRow(P)$.
  If $L(\bar A,\q)$ is finite dimensional, then so are $L(\bar A_0,\q')$ and
$L_{\operatorname D}(\bar A_{1/2},\q'')$.
\end{Lemma}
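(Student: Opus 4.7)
The proof strategy is to construct an intermediate Levi subalgebra $\g^\s$ of $\g$ whose finite $W$-algebra factors as a tensor product reflecting the coset decomposition of $\bar A$. First, within each row of $\bar A$ I would use row equivalence to arrange the entries so that those from the same coset pair $\{\pm z + \Z\}$ occupy consecutive boxes. This induces a symmetric vertical splitting of the pyramid $P$ into sub-pyramids, one per coset pair, and thus determines a full subalgebra $\s \sub \t^e$. The centralizer then decomposes as
\[
\g^\s \iso \g' \oplus \g'' \oplus \bigoplus_z \gl_{n_z},
\]
where $\g'$ (of the same type as $\g$) corresponds to the integral coset, $\g''$ (of type D) to the half-integral coset, and each $\gl_{n_z}$ to a pair $\{\pm z + \Z\}$ with $z \notin \frac{1}{2}\Z$. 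Correspondingly $e$ splits as a sum of even multiplicity nilpotents in each summand, with $\part(e') = \part(\bar A_0)$ and $\part(e'') = \part(\bar A_{1/2})$, and $U(\g^\s,e)$ becomes the tensor product of the finite $W$-algebras of the summands.

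Next I would address admissibility. If $\s$ is not admissible for $\q$, I would apply Theorem~\ref{T:simon} and Lemma~\ref{L:induct} to pass, through a sequence of rank one changes of highest weight theory, to some pair $(\Lambda, \tilde\q)$ with $L(\Lambda, \tilde\q) \iso L(\bar A, \q)$ and such that $\s$ is admissible for $\tilde\q$. The relevant rank one changes interchange adjacent rows of equal length and preserve finite dimensionality.

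Once admissibility is established, \cite[Proposition 3.6]{BG1} shows that $L^\s(\Lambda, (\tilde\q)^\s)$ is finite dimensional. Under the tensor product decomposition of $U(\g^\s,e)$, this irreducible becomes an outer tensor product of irreducibles for each summand; after matching the labelling conventions and $\rho$-shifts of \cite[\S3.3]{BG2}, two of these factors are precisely $L(\bar A_0, \q')$ and $L_{\operatorname D}(\bar A_{1/2}, \q'')$. Finite dimensionality of the outer tensor product forces each factor to be finite dimensional, completing the proof.

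The principal obstacle is the admissibility step, where the column splitting determined by the coset structure of $\bar A$ must be reconciled with the parabolic structure of $\q$; this is resolved by invoking Theorem~\ref{T:simon}. The remainder of the argument is a verification that the Levi decomposition of $U(\g^\s, e)$ matches the labelling conventions under which $L(\bar A_0, \q')$ and $L_{\operatorname D}(\bar A_{1/2}, \q'')$ are defined.
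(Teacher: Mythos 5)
Your construction is not available in the framework of the paper. A full subalgebra $\s \sub \t^e$ always gives a Levi subalgebra $\g^\s$ containing $\g_0$, and such Levis are parameterized by \emph{groupings of the rows} of the pyramid $P$ (since $\t^e$ has a basis indexed by the rows); in particular the nilpotent in $\g^\s$ is still $e$, and the sizes of the $\gl$-factors are sums of the row lengths $m_i$. There is no full subalgebra $\s$ whose Levi decomposes as $\g' \oplus \g'' \oplus \bigoplus_z \gl_{n_z}$ with $\part(e') = \part(\bar A_0)$ and $\part(e'') = \part(\bar A_{1/2})$: that would require subdividing individual rows of $P$ by the coset structure of $A$, which changes the ambient nilpotent and does not correspond to any Levi of $\g$ containing $\g_0$. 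The coset decomposition is a feature of the weight $\bar A$, not of the pair $(\g,e)$, so it cannot be realized as an admissibility/intermediate-Levi argument; everything downstream (the tensor-product decomposition, the application of \cite[Proposition 3.6]{BG1}) rests on this non-existent $\s$.

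The paper's proof works in the opposite direction. It first settles the rank-one (two-row) case using the classification of finite dimensional modules for \emph{rectangular} finite $W$-algebras in \cite[Theorem 1.2]{Bro2}, where the coset decomposition of a highest weight is understood explicitly. It then passes to the general case using Theorem~\ref{T:simon}, the inductive criterion in terms of rank-one Levi subalgebras, together with Lemmas~\ref{L:split2} and \ref{L:ctod}, which show that the component-group action and the rank-one changes of highest weight theory interact coset-by-coset. That is, the induction is along rows of $P$ (rank-one Levis), with the coset splitting handled within each two-row piece, rather than via a single Levi that globally separates cosets. To repair your argument you would need to replace the proposed $\g^\s$ with this rank-one induction.
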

\begin{proof}
   The two row case follows from \cite[Theorem 1.2]{Bro2}.  The general case now
follows from the
two row case, Theorem~\ref{T:simon} and Lemmas~\ref{L:split2} and \ref{L:ctod}.
\end{proof}

\section{Proof of the main theorem} \label{S:main}

In this section we prove Theorem~\ref{T:main}.
To avoid overly cumbersome notation we abuse language somewhat and
say that $\bar A \in \Row(F)$ is {\em row equivalent to column strict} if
the left justification of $\bar A$ contains column strict elements.

First we prove Lemma~\ref{L:mainhelp}, which gives a key step in the proof of Theorem~\ref{T:main}.
For this lemma, we need to recall some notation about changing highest weight theories from \cite{BG2}.
We have defined a parabolic subalgebra in terms of the coordinate pyramid $K$ from the introduction.
We let $W_r$ denote the Weyl group of type $B_r$ with standard generators
$(1,2)(-2,-1), \dots, (r-1,r)(-r,1-r)$ and $(r,-r)$.  Then $W_r$ ``acts'' on $K$ by permuting rows.
For any $\sigma \in W_r$ we can define a parabolic subalgebra $\q_{\sigma}$
with Levi factor $\g_0$ by setting
$\q_{\sigma} = \g \cap \langle e_{i,j} \mid \row_\sigma(i) \leq \row_\sigma(j) \rangle$,
where $\row_\sigma(i)$ denotes the row of $\sigma K$ which contains $i$.

\begin{Lemma} \label{L:mainhelp}
   Let $\phi = \pm$ and let $\bar A \in \sRow_\phi^\diamond(P)$.
   Then $L(\bar A, \q) \cong L(\bar B, \q_{(1,-1)})$ for some $\bar B \in \sRow(P)$.
\end{Lemma}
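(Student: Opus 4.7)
The plan is to realise the change of highest weight theory from $\q$ to $\q_{(1,-1)}$ as a composition of rank one changes of highest weight theories, so that the desired isomorphism follows from iterated application of Lemma~\ref{L:induct}. Inside the Weyl group $W_r$ of type $B_r$, the transposition $(1,-1)$ admits the reduced factorisation
\[
(1,-1) = s_1 s_2 \cdots s_{r-1}\, s_r\, s_{r-1} \cdots s_2 s_1,
\]
where $s_i = (i,i+1)(-(i+1),-i)$ for $1 \le i < r$ is the simple reflection interchanging adjacent rows of the coordinate pyramid, and $s_r = (r,-r)$ interchanges the two middle rows. Each such simple reflection corresponds to a codimension one admissible full subalgebra of $\t^e$: the $s_i$ with $i<r$ give a rank one type~$A$ Levi, while $s_r$ gives a rank one classical Levi in the centre.

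First I would perform rank one changes of highest weight theory in the order dictated by this factorisation, starting from $(\bar A, \q)$. By the descriptions of the row swap operators and of $c$ in \cite[\S5]{BG2}, the rank one change associated with $s_i$ for $i<r$ acts on the label by $\bar s_i \star$, and the rank one change associated with $s_r$ acts by $c \cdot$. At the end of the chain, the parabolic has become $\q_{(1,-1)}$ and the label has become
\[
\bar B \;=\; \bar s_1 \star \bar s_2 \star \cdots \bar s_{r-1} \star \bigl(c \cdot \bigl(\bar s_{r-1} \star \cdots \bar s_2 \star \bar s_1 \star \bar A\bigr)\bigr) \in \sRow(P).
\]
Provided every intermediate rank one change along the chain is legitimate, repeated application of Lemma~\ref{L:induct} gives $L(\bar A,\q) \iso L(\bar B, \q_{(1,-1)})$.

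The main obstacle is verifying that every rank one change along this chain is in fact well-defined, i.e.\ that the corresponding rank one $L^\s$ is finite dimensional at each intermediate stage. The hypothesis $\bar A \in \sRow_\phi^\diamond(P)$ enters precisely here: by Corollary~\ref{C:plus}(i) the action $w \phiop \bar A$ is defined for every $w \in \tilde C$, and since the description of the generators $c_j$ of $\tilde C$ is built out of exactly the operations $\bar s_i \star$ and $c \cdot$ that appear in our chain, every such operation in the chain is defined on the relevant intermediate table. To translate ``defined'' into the finite dimensionality of the corresponding rank one module I would reduce to a single coset $\pm z + \Z$ at a time via Lemmas~\ref{L:split1} and~\ref{L:split2}, and then invoke Lemma~\ref{L:plus} together with the integral-case input from \cite[Theorem 5.13]{BG1} and the type-$A$ criterion from \cite[Theorem 7.9]{BK2} to extract the column strictness condition needed for each rank one step. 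Once finite dimensionality along the chain is secured, the iterated application of Lemma~\ref{L:induct} produces the required $\bar B$.
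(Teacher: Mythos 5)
Your overall strategy is exactly the one the paper uses: factor $(1,-1) = \omega_r^{-1}(r,-r)\omega_r$ with $\omega_r = s_{r-1}\cdots s_1$, move $\bar A$ down through the rows via $\bar s_{r-1}\star\cdots\bar s_1\star$, cross the middle via $c\,\cdot$, move back up via $\bar s_1 \star \cdots \bar s_{r-1}\star$, and invoke Lemma~\ref{L:induct} at each rank one change. Your formula for $\bar B$ coincides with the paper's in the case that $c$ acts nontrivially, and the downward part of the chain is correctly justified: Corollary~\ref{C:plus} gives that $\bar A_+$ is row equivalent to column strict, and this is what allows $\bar\omega_r\star\bar A := \bar s_{r-1}\star\cdots\bar s_1\star\bar A$ to be formed.

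There is, however, a genuine gap in your justification of the \emph{upward} half of the chain. After the middle step you have to verify that $(c\cdot(\bar\omega_r\star\bar A))_+$ is row equivalent to column strict, since this is what legitimises the operators $\bar s_1\star,\dots,\bar s_{r-1}\star$ you apply next. You appeal to Corollary~\ref{C:plus} and Lemma~\ref{L:plus}, but those statements concern the tables $w\phiop\bar A$ with $w\in\tilde C$, and the generators $c_j$ of $\tilde C$ are built from chains $\bar s_{i_j}\star\cdots\bar s_{r-1}\star\,c\cdot\,\bar s_{r-1}\star\cdots\bar s_{i_j}\star$ starting at row $i_j$, which in general is not row $1$; so the intermediate table $c\cdot(\bar\omega_r\star\bar A)$ is not in the $\tilde C$-orbit of $\bar A$ and those results do not apply to it directly. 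The paper closes this gap by a separate argument: it considers the $\sharp$-element $a$ of row $r$ of $\bar\omega_r\star\bar A$, treats the cases $a\in\frac12\Z$ (handled by the integral classification of \cite[Theorem 5.13]{BG1}) and $a\notin\frac12\Z$ (where condition~(M3) gives convexity of $(c\cdot(\bar\omega_r\star\bar A)_{\pm a})_+$, and then Lemma~\ref{L:sharpknuth} yields column strictness). You would need to supply some such argument for the upward chain to be legitimate.

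Finally, a smaller point: the paper separates two parity cases at the middle step (even/odd number of boxes in the relevant row, depending on type), in which \cite[Theorem 5.11]{BG2} respectively leaves the label unchanged or replaces it by $c\cdot(\bar\omega_r\star\bar A)$. In the first case $\bar B=\bar A$, so your uniform formula is only correct if you interpret $c$ as the identity there; worth making explicit, though this is not the substantive issue.
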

\begin{proof}
   Let $\bar \omega_r \star$ denote the operator
   $\bar s_{r-1} \star \dots \star \bar s_1 \star$,
     and let $\omega_r$ denote
  $s_{r-1} \dots s_1 \in W_r$, where $s_i = (i,i+1)(1-i, -i)$.
   We also abuse notation somewhat and let $\bar \omega_r^{-1} \star$ denote the operator
   $\bar s_{1} \star \dots \star \bar s_{r-1} \star$.
   By Corollary~\ref{C:plus} we have that $\bar A_+$ is row equivalent to column strict, so
    $\bar \omega_r \star \bar A$
is defined by \cite[Proposition 5.3]{BG2}.  Thus
   Lemma~\ref{L:induct} and \cite[Theorem 4.7]{BG2}  imply that
   $L(\bar A, \q) \cong L(\bar \omega_r \star \bar A, \q_{\omega_r})$.

    We need to consider different cases.  The first case is that
    $P$ has an even number of boxes in row $1$
    if $\g = \so_{2n}$, or $P$ has an odd number of boxes in row $r$ if $\g = \sp_{2n}$.
    In these cases,
     \cite[Theorem 5.11]{BG2} and Lemma~\ref{L:induct} imply that
    $L(\bar A, \q) \cong L(\bar \omega_r \star \bar A, \q_{(r,-r) \omega_r})$,
so
   again by
     Lemma~\ref{L:induct}  we get that
     $L(\bar A, \q) \cong L(\bar \omega_r^{-1} \star \bar \omega_r \star \bar A,
           \q_{\omega_r^{-1} (r,-r) \omega_r}) = L(\bar A,\q_{(1,-1)})$.

     Now we consider the case that
    $P$ has an odd number of boxes in row $1$
    if $\g = \so_{2n}$, or $P$ has an even number of boxes in row $r$ if $\g = \sp_{2n}$.
    By \cite[Theorem 5.11]{BG2} and Lemma~\ref{L:induct} we have that
    $L(\bar A,\q) \cong L(c \cdot (\bar \omega_r \star \bar A), \q_{(r,-r) \omega_r})$.
    Now we need to show that $(c \cdot (\bar \omega_r \star \bar A))_+$ is row equivalent to column strict.
    Indeed, let $a$ be the $\sharp$-element of row $r$ of $\bar \omega_r \star \bar A$ (see \cite[\S5.5]{BG2}
      for the definition of the $\sharp$-element of a list of complex numbers).
     If $a \in \frac{1}{2} \Z$, then $(c \cdot (\bar \omega_r \star \bar A))_+$ is row equivalent to column strict
by the ``integral case'', that is by \cite[Theorem 5.13]{BG1} and Lemma~\ref{L:induct}.
     If $a \not \in \frac{1}{2} \Z$,
        then by condition (M3) in the definition of $\sRow_\phi^\diamond(P)$
        we have that  $(c \cdot (\bar \omega_r \star \bar A)_a)_+$ and
        $(c \cdot (\bar \omega_r \star \bar A)_{-a})_+$  are both convex, so by
        Lemma~\ref{L:sharpknuth} they are both row equivalent to column strict.

Since  $(c \cdot (\bar \omega_r \star \bar A))_+$ is row equivalent to column strict,
$\bar \omega_r^{-1} \star (c \cdot (\bar \omega_r \star A))$ is defined.
Let $\bar B = \bar \omega_r^{-1} \star (c \cdot (\bar \omega_r \bar A))$, so by
     Lemma~\ref{L:induct}  we get that
     $L(\bar A, \q) \cong L(\bar B, \q_{\omega_r^{-1} (r,-r) \omega_r})
         = L(\bar B,\q_{(1,-1)})$.
\end{proof}

Now we are ready to prove Theorem~\ref{T:main}:

\begin{proof}[Proof of Theorem~\ref{T:main}]
  First we suppose that $\bar A \in \sRow_\phi^\diamond(P)$.
We need to show that $L(\bar A,\q)$ is finite dimensional.  To do this,
we show that $L(\bar A,\q)$ is isomorphic to $L(\bar B, \q_-)$, where
$\q_-$ denotes the opposite parabolic to $\q$,
for some $\bar B \in \sRow(P)$.
  We proceed by induction on the number of rows of $P$.

   By Lemma~\ref{L:mainhelp}, $L(\bar A, \q) \cong L(\bar D, \q_{(1,-1)})$
   for some $\bar D \in \sRow(P)$.  We recall that $D_{-2}^2$ is the table
   obtained from $D$ by removing rows 1 and -1, and $P^2_{-2}$ is defined similarly.
We claim that $\bar D_{-2}^2$ is $\tilde C_{-2}^2$ conjugate to an element of
$\sRow_\phi^\diamond(P_{-2}^2)$, where $\tilde C_{-2}^2$ is the component group
corresponding to $P_{-2}^2$, i.e.\ $\tilde C_{-2}^2$ is defined
from $P^2_{-2}$ in the same way that $\tilde C$ is defined from $P$.

It follows from the definition of the row swapping operations and the
action of $c$ that conditions (M1), (M2), (M3) and the convexity part of (M4) hold for $\bar D_{-2}^2$.
So we are left to show that $(\bar D_z)_{-2}^2$ is row equivalent to column strict for all $z \in \C$.

By Lemma~\ref{L:sharpknuth} and Theorem~\ref{T:recs}, we have
that $(\bar D_z)_{-2}^2$ is row equivalent to column strict for all $z \in \C \setminus \frac{1}{2} \Z$.
Since $L_{\operatorname D}(\bar A_{1/2},\q'')$ and $L(\bar A_0,\q')$ are finite dimensional by \cite[Theorem 5.13]{BG1},
we also have that $L_{\operatorname D}(\bar D_{1/2},\q''_{(1,-1)})$ and $L(\bar D_0,\q'_{(1,-1)})$ are finite dimensional.  Here
we are using the notation introduced before Lemma~\ref{L:intcosets}.
Thus by \cite[Theorem 5.11]{BG2}, Lemma~\ref{L:split2}, and \cite[Theorem 5.13]{BG1} we have that
$\bar D_{1/2}$ and $\bar D_0$ are $\tilde C_{-2}^2$ conjugate to a row equivalent to column strict.
This implies that $\bar D$ is also $\tilde C_{-2}^2$ conjugate to a row equivalent to column strict
row equivalence class of s-tables.

So we can repeatedly apply Lemma~\ref{L:mainhelp} to get that
$L(\bar A, \q) \cong L(\bar B, \q_{(1,-1) \dots (r-r)})$ for some $\bar B \in \sRow(P)$.
Now the implication follows on noting that $\q_{(1,-1) \dots (r-r)} = \q_-$, so that
$L(\bar A, \q)$ is both a highest and lowest weight and, hence, finite dimensional.

Now suppose that $L(\bar A,\q)$ is finite dimensional.
We can check that conditions (M1), (M2) and (M3) hold by using the row swapping operations and
\cite[Proposition 3.6]{BG1} along with \cite[Theorem 1.2]{Bro2}.  Also the convexity
of $A_z$ for each $z \in \C$ part of (M4) can be proved in the same way.  So we are left to show that
$A_z$ is row equivalent to column strict for each $z \in \C$.

By Lemma~\ref{L:intcosets}
there exists $w_1, w_2 \in \tilde C$ such that $w_1 \cdot \bar A_{1/2}$ and $w_2 \cdot \bar A_0$
are
justified row equivalent to column strict; furthermore $w_1$ and $w_2$ can be chosen so that
  $w_1 \cdot w_2 \cdot \bar A_{1/2} = w_1 \cdot \bar A_{1/2}$ and
  $w_1 \cdot w_2 \cdot \bar A_{0} = w_2 \cdot \bar A_{0}$.
This allows as to assume that $\bar A_{1/2}$ and $\bar A_0$ are row equivalent to column strict.

So we need to consider $A_{\pm z}$
for $z \not \in \frac{1}{2}\Z$.
Let $p_1, \dots, p_r, p_r, \dots, p_1$ be the row lengths of $A_{\pm z}$.
We may assume that $p_1 \neq 0$, since otherwise we are done by induction.

Now we consider $\bar A_z$.  Let $p_1', \dots, p_r', p_r'', \dots, p_1''$ be the row lengths of $A_z$.
Furthermore, by changing highest theories using the row swapping operations as in \cite[\S4]{BG2}
and using Lemmas~\ref{L:sharpknuth} and \ref{L:ctod}, we may assume that
$p'_i = \lfloor \frac{p_i}{2} \rfloor$
and $p''_i = \lceil \frac{p_i}{2} \rceil$ for all $i$.
We choose $A_z \in \bar A_z$ to have increasing rows.

By induction, we have that $(A_z)_{-2}^2$ is justified row equivalent to column strict.
Since $L(\bar A,\q)$ is finite dimensional, we also have that $(A_z)_+$ and $(A_z)_-$ are justified row equivalent to column strict, here $(A_z)_-$ denotes
the bottom half of $A_z$.  So
if we slide row $-1$ of the left justification $l(A_z)$ of $A_z$ right so that it is right justified with row $-2$ of $l(A_z)$, then
the resulting diagram is row equivalent to column strict.  Let $B$ denote this diagram.  So by Lemma~\ref{L:altcRS}.
the column lengths of $B$ form a lower bound for the transpose $\part(\RS(A_z))^T$ of $\part(\RS(A_z))$.
We explicitly give this bound in all cases:

If $p_1$ and $p_2$ are even and $p_1' \leq p_2'/2$, then
\[
\part(\RS(A_z))^T \geq
     ((r-1)^{2 p_1'}, (r-2)^{p_2' - 2p_1'}, (r-4)^{p_3'-p_2''},
       (r-5)^{p_3''-p_3'}, \dots, 2^{p_r'-p_{r-1}''}, 1^{p_r''-p_r'}),
\]
hence,
\[
\part(\RS(A_z)) \leq
(p_r'',p_r', \dots, p_2'',p_2',2 p_1').
\]

If $p_1$ and $p_2$ are even and $p_1' > p_2'/2$, then
\[
\part(\RS(A_z))^T \geq
     (r^{2p_1'-p_2'}, (r-1)^{2p_2' - 2 p_1'}, (r-4)^{p_3'-p_2''},
       (r-5)^{p_3''-p_3'}, \dots, 2^{p_r'-p_{r-1}''}, 1^{p_r''-p_r'}),
\]
hence,
\[
\part(\RS(A_z)) \leq
(p_r'',p_r', \dots, p_2'',p_2',p_2', 2 p_1'-p_2').
\]

If $p_1$ is even and $p_2$ is odd and $p_1' \leq p_2'/2$, then
\[
\part(\RS(A_z))^T \geq
     ((r-1)^{2 p_1'-1}, (r-2)^{p_2'-2p_1'+2}, (r-4)^{p_3'-p_2''},
       (r-5)^{p_3''-p_3'}, \dots, 2^{p_r'-p_{r-1}''}, 1^{p_r''-p_r'}),
\]
hence,
\[
\part(\RS(A_z)) \leq
(p_r'',p_r', \dots, p_2'',p_2'+1, 2 p_1'-1).
\]

If $p_1$ is even and $p_2$ is odd and $p_1' > p_2'/2$, then
\begin{align*}
\part(\RS&(A_z))^T \geq \\
     &(s^{2p_1'-p_2'-1}, (s-1)^{2p_2' - 2 p_1'+1}, s-2, (s-4)^{p_3'-p_2''},
       (s-5)^{p_3''-p_3'}, \dots, 2^{p_s'-p_{s-1}''}, 1^{p_s''-p_s'}),
\end{align*}
hence,
\[
\part(\RS(A_z)) \leq
(p_s'',p_s', \dots, p_2'',p_2'+1, p_2', 2 p_1'-p_2'-1).
\]

If $p_1$ is odd and $p_2$ is even and $p_1'' \leq p_2'/2$, then
\[
\part(\RS(A_z))^T \geq
     ((s-1)^{p_1'+p_1''}, (s-2)^{p_2'-p_1'-p_1''}, (s-4)^{p_3'-p_2''},
       (s-5)^{p_3''-p_3'}, \dots, 2^{p_s'-p_{s-1}''}, 1^{p_s''-p_s'}),
\]
hence,
\[
\part(\RS(A_z)) \leq
(p_s'',p_s', \dots, p_2'',p_2', p_1'+p_1'').
\]

If $p_1$ is odd and $p_2$ is even and $p_1'' > p_2'/2$, then
\[
\part(\RS(A_z))^T \geq
     (s^{2p_1'-p_2'+1}, (s-1)^{2p_2' - 2 p_1'-1}, (s-4)^{p_3'-p_2''},
       (s-5)^{p_3''-p_3'}, \dots, 2^{p_s'-p_{s-1}''}, 1^{p_s''-p_s'}),
\]
hence,
\[
\part(\RS(A_z)) \leq
(p_s'',p_s', \dots, p_2'',p_2', p_2', p_1'+p_1''-p_2').
\]

If $p_1$ and $p_2$ are odd and $p_1'' \leq (p_2''+1)/2$, then
\begin{align*}
\part(\RS&(A_z))^T \geq \\
     &((s-1)^{p_1'+p_1''-1}, (s-2)^{p_2'-p_1'-p_1''+2}, (s-4)^{p_3'-p_2''},
       (s-5)^{p_3''-p_3'}, \dots, 2^{p_s'-p_{s-1}''}, 1^{p_s''-p_s'}),
\end{align*}
hence
\[
\part(\RS(A_z)) \leq
(p_s'',p_s', \dots, p_2'',p_2'+1, p_1'+p_1''-1).
\]

If $p_1$ and $p_2$ are odd and $p_1'' > (p_2''+1)/2$, then
\[
\part(\RS(A_z))^T \geq
     (s^{2p_1'-p_2'}, (s-1)^{2p_2' - 2 p_1'}, s-2,(s-4)^{p_3'-p_2''},
       (s-5)^{p_3''-p_3'}, \dots, 2^{p_s'-p_{s-1}''}, 1^{p_s''-p_s'}),
\]
hence
\[
\part(\RS(A_z)) \leq
(p_s'',p_s', \dots, p_2'',p_2'+1, p_2', p_1'+p_1''-p_2'-1).
\]

Note that $(\bar A_z)_+$ defines an irreducible highest weight module for
$U(\gl_{m'},e')$, where $m' = p_1'+\dots+p_r'$ and $e'$ is the
nilpotent element corresponding to the frame of $(A_z)_+$.
Since $(\bar A_z)_+$ is row equivalent to column strict, we have that this
highest weight module is finite dimensional by \cite[Theorem 7.9]{BK2}.  We can
argue similarly with $(A_z)_-$ in place of $(A_z)_+$.
We can perform row swaps on $\bar A_z$ to obtain $\bar D$, a row equivalence class of s-tables with
rows lengths $(p_2', p_3', \dots, p_s', p_1', p_1'', p_s'', \dots, p_2'')$; let $\omega \in W_r$ correspond
to this sequence of row swaps.
Since $\bar D_+$ is convex, the corresponding highest weight module for $U(\gl_{m'},e')$ is finite dimensional.
Therefore, by Theorem~\ref{T:recs}, we have that
$\bar D_+$ is row equivalent to column strict. Similarly $\bar D_-$ is row equivalent to column strict.

Consider $\omega \star \bar A$, the row equivalence obtained from $\bar A$ by the row swaps
used to get $\bar D$ from $\bar A_z$.  Then $L(\omega \star A, \q_\omega) \iso L(A,\q)$ is finite
dimensional.  So it follows from \cite[Proposition 3.6]{BG1} and \cite[Theorem 1.2]{Bro2}
that $\bar D_{-r}^r$ is row equivalent to column strict.

Let $D \in \bar D$.  It follows that in $D$ we can find $p_1'$ decreasing chains of length $2 r$.
This implies that $\part(\RS(\bar D))$ is smaller than a partition
with $2 r$ parts which ends in $p_1'$.
In all cases, except when $p_1$ and $p_2$ are odd, this guarantees that
\[
\part(\RS(A_z)) =
(p_r'',p_r', \dots, p_2'',p_2', p_1'', p_1'),
\]
which by Theorem~\ref{T:recs} implies that $\bar A_z$ is row equivalent to column strict.

Now we only need consider the case that $p_1$ and $p_2$ are odd.
Let $d$ be the $\sharp$-element of row $-r$ of $D$ (see \cite[\S5.5]{BG2}).
In this case, we can find $p_1''$ descending chains in $D_-$ of length $r$.
We set aside one of these chains which begins with $d$, and let $c_1, \dots, c_{p_1'}$ be the remaining chains.
We can also find $p_1''$ descending chains in $(c \cdot D)_+$ of length $s$, and
we set aside one of these chains which ends with $-d$, and let $d_1, \dots, d_{p_1'}$ be the remaining chains.
Let $D'$ denote $D$, but with one occurrence of $d$ removed from row $-s$.
So $(D')_{-r}^r$ is justified row equivalent to column strict, thus it is possible to
glue each chain $c_i$ with a chain $d_j$ to obtain $p_1'$ descending chains in $D$ of length $2 r$.
We can also combine the chains of length $r$ that we ``set aside'', then remove one occurrence of $d$,
to obtain a chain of length $2r-1$ in $D$
which is disjoint from the chains of length $2 r$ we just constructed.
This implies that $\part(\RS(D))$ is larger than a partition
with $2 r$ parts of the form $(*, p_1'', p_1')$.
So
\[
\part(\RS(A_z)) =
(p_s'',p_s', \dots, p_2'',p_2', p_1'', p_1'),
\]
which by Theorem~\ref{T:recs}  implies that $A_z$ is justified row equivalent to column strict.
\end{proof}

As a corollary to the theorem we obtain a parametrization of the primitive ideals of $U(\g)$ with associated variety
$\overline {G \cdot e}$.
For every element of the set $\{\pm z  + \Z \mid z \in \C \setminus \frac{1}{2} \Z\}$ make a choice
$z_+ + \Z$ and $z_- + \Z$ so that
$\pm z + \Z = (z_+ + \Z) \sqcup (z_+ + \Z)$.
Let $\sTab_\phi^{\bullet}(P)$ denote the set of elements $A \in \sTab_\phi^{\diamond}(P)$
such that
for each element of the set $\{\pm z + \Z \mid z \in \C \setminus \frac{1}{2} \Z \}$
and each row in the top half of $A$, there are either more entries, or the same number of entries
in that row from $z_+ + \Z$ than there are from $z_- + \Z$.

In the case $\g = \so_{2n}$ we set
$C = C_G(e)/C_G(e)^\circ$ where $G = \SO_{2n}$.
Then $C = \tilde C = 1$ if all parts of $\part(P)$ are even, and
$C \sub \tilde C$ is a subgroup of index 2 otherwise.  In the latter case
we use $\sRow_\phi^{\bullet '}(P)$
to denote the elements $A$ of $\sRow_\phi^\bullet(P)$
for which there exists $w \in \tilde C \setminus C$
such that $w \cdot \bar A = \bar A$.
We note that
if $\bar A$ has no boxes filled with $0$, then
$\bar A \notin \sRow_\phi^{\bullet '}(P)$.

The following corollary is immediate from Theorem~\ref{T:main} and
the map $\cdot ^\dagger$ from \cite[\S1.2]{Lo2}.

\begin{Corollary} \label{C:prim} $ $
\begin{enumerate}
\item[(a)] Let $\g = \sp_{2n}$. Then
\[
\{\Ann_{U(\g)}L(\lambda_{\bar A}) \mid \bar A \in \sRow_-^{\bullet}(P) \}
\]
is a complete set of
pairwise distinct
primitive ideals of $U(\g)$
with associated variety $\bar{G \cdot e}$.
\item[(b)] Let $\g = \so_{2n}$.
\begin{enumerate}
\item[(i)]  Suppose that all parts of $\part(P)$ are even.  Then
\[
\{\Ann_{U(\g)}L(\lambda_{\bar A}) \mid \bar A \in \sRow_+^{\bullet}(P) \}
\]
is a complete set of
pairwise distinct primitive ideals of $U(\g)$ with
associated variety $\bar{G \cdot e}$.
\item[(ii)] Suppose that $\part(P)$ has odd parts.  Then
\begin{align*}
\{\Ann_{U(\g)} & L(\lambda_{\bar A})   \mid \bar A \in \sRow_+^{\bullet '}(P) \\
&\cup
\{\Ann_{U(\g)}L(\lambda_{\bar A})  \mid \bar A \in \sRow_+^\bullet(P) \setminus  \sRow_+^{\bullet '}(P) \} \\
&\cup
\{\Ann_{U(\g)}L(\lambda_{c_1 \cdot \bar A})  \mid \bar A \in
\sRow_+^\bullet(P) \setminus  \sRow_+^{\bullet '}(P) \}
\end{align*}
is a complete set of
pairwise distinct primitive ideals of $U(\g)$ with
associated variety $\bar{G \cdot e}$.
\end{enumerate}
\end{enumerate}
\end{Corollary}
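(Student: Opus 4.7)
The plan is to combine Theorem~\ref{T:main} with Losev's map $\cdot^\dagger$ from \cite[\S1.2]{Lo2}. Losev's theorem asserts that the assignment sending the annihilator of a finite dimensional irreducible $U(\g,e)$-module $V$ to $(\Ann_{U(\g,e)} V)^\dagger$ gives a surjection from the set of annihilators of finite dimensional irreducible $U(\g,e)$-modules onto the set of primitive ideals of $U(\g)$ with associated variety $\overline{G \cdot e}$, whose fibers are precisely the $C$-orbits, where $C = C_G(e)/C_G(e)^\circ$. Moreover, under the highest weight labelling of \cite{BGK}, the map sends $\Ann_{U(\g,e)} L(\bar A,\q) \mapsto \Ann_{U(\g)} L(\lambda_A)$ whenever $\lambda_A$ is antidominant for $\g_0$. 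Combined with Theorem~\ref{T:main}, our task reduces to selecting one representative from each $C$-orbit on $\tilde C$-orbits of $\sRow_\phi^\diamond(P)$.

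For part (a), $\g = \sp_{2n}$, we have $C = \tilde C$, so we require representatives of $\tilde C$-orbits in $\sRow_-^\diamond(P) = \sRow^{\diamond,+}(P)$, and I would argue that $\sRow_-^\bullet(P)$ provides exactly such a set. By Lemmas~\ref{L:split2} and \ref{L:ctod}, the action of each generator $c_j$ of $\tilde C$ decomposes according to the cosets of $\C/\Z$, and on a piece $\bar A_{\pm z}$ with $z \notin \frac{1}{2}\Z$ it interchanges the roles of $z_+ + \Z$ and $z_- + \Z$. The dominance condition defining $\sRow_-^\bullet(P)$ then singles out a unique element from each $\tilde C$-orbit by requiring, in every row of the upper half and for every relevant coset pair, at least as many entries from $z_+ + \Z$ as from $z_- + \Z$.

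For part (b), $\g = \so_{2n}$, the component group $C$ of $G = \SO_{2n}$ equals $\tilde C$ when $\part(P)$ has only even parts (both being trivial), giving case (i) directly by the argument of part (a). When $\part(P)$ has odd parts, $C$ has index $2$ in $\tilde C$, so each $\tilde C$-orbit either remains a single $C$-orbit or splits into two. The set $\sRow_+^{\bullet'}(P)$ was defined as the elements of $\sRow_+^\bullet(P)$ fixed by some element of $\tilde C \setminus C$; these are exactly the representatives whose $\tilde C$-orbit does not split. For $\bar A \in \sRow_+^\bullet(P) \setminus \sRow_+^{\bullet'}(P)$, the $\tilde C$-orbit splits into two $C$-orbits, represented by $\bar A$ and $c_1 \cdot \bar A$ respectively, where $c_1$ is chosen in $\tilde C \setminus C$; this accounts for the three pieces appearing in the union in case (ii).

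The main obstacle will be verifying rigorously that the dominance condition defining $\sRow_\phi^\bullet(P)$ selects exactly one representative from each $\tilde C$-orbit in $\sRow_\phi^\diamond(P)$, which requires tracing through how the row swapping operators $\bar s_i \star$ and the action of $c$ interact with the coset decomposition. This reduces to a case analysis via Lemmas~\ref{L:split1}--\ref{L:sharpknuth} together with the explicit descriptions in \cite[\S5]{BG2}. A secondary subtlety is identifying the distinguished generator of $\tilde C$ that lies outside $C$ in the $\so_{2n}$ setting with odd parts, which by the conventions on labelling generators by the largest odd part of $\part(P)$ can be taken to be $c_1$; one must also check that an s-table with no entry equal to $0$ cannot be $(\tilde C \setminus C)$-fixed, matching the observation made in the setup to the corollary.
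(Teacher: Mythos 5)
Your proposal is correct and follows the same route as the paper, which gives only the one-line justification ``immediate from Theorem~\ref{T:main} and the map $\cdot^\dagger$ from~\cite[\S1.2]{Lo2}''; you have merely unpacked what that citation entails (Losev's surjection with $C$-orbits as fibres, the identification of $C$ with $\tilde C$ for $\sp_{2n}$ and the index-two situation for $\so_{2n}$ with odd parts, and the role of $\sRow_\phi^\bullet(P)$ as orbit representatives). The combinatorial verification you flag as the remaining obstacle is likewise left implicit in the paper, so your level of detail is, if anything, slightly fuller than the original.
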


\end{document}